\newtheorem{thm}{Theorem}[section]
\newtheorem{lem}[thm]{Lemma}
\newtheorem{prop}[thm]{Proposition}
\theoremstyle{definition}
\newtheorem{note}[thm]{Note}
\theoremstyle{remark}
\newcommand{\R}{\mathbf{R}}
\newcommand{\RP}{\mathbf{RP}}
\newcommand{\Z}{\mathbf{Z}}
\newcommand{\ol}[1]{{\overline #1}}
\renewcommand{\tilde}{\widetilde}
\newcommand{\C}{\mathcal{C}}
\renewcommand{\S}{\mathbf{S}}
\renewcommand{\H}{\mathbf{H}}
\renewcommand{\Z}{\mathbf{Z}}
\begin{document}


\title[Vertices of closed curves in  Riemannian surfaces]{Vertices of closed curves in Riemannian surfaces}

\author{Mohammad Ghomi}
\address{School of Mathematics, Georgia Institute of Technology,
Atlanta, GA 30332}
\email{ghomi@math.gatech.edu}
\urladdr{www.math.gatech.edu/$\sim$ghomi}
\subjclass{Primary: 53C20, 53C22; Secondary: 53A04, 53A05.}
\keywords{Four vertex theorem, Riemannian surface, surface of constant curvature, space form, hyperbolic surface,  geodesic curvature.}
\date{Last Typeset \today.}
\thanks{Supported by NSF Grant DMS-0336455, and CAREER award DMS-0332333.}

\begin{abstract}
We uncover some connections between the topology of a complete Riemannian surface $M$ and the minimum number of \emph{vertices}, i.e., critical points of geodesic curvature, of closed curves in $M$. In particular
we show that the space forms with finite fundamental group are the only  surfaces  in which every simple closed curve has  more than two vertices. Further we characterize the  simply connected  space forms as the only  surfaces in which every closed curve bounding a compact immersed surface has more than two vertices.
\end{abstract}

\maketitle

\section{Introduction}
In this paper we study the relation between the topology of a complete Riemannian surface $M$ and the minimum number of \emph{vertices}, i.e., critical points of geodesic curvature, of closed curves in $M$. Our prime motivation here
is   the  classical theorem of 
 Kneser \cite{kneser, montiel&ros}, which states that
 any $(\C^2)$ simple closed curve in Euclidean plane $\R^2$  has at least four vertices. It is known that this result also holds in the sphere $\S^2$ and hyperbolic plane $\H^2$, since the stereographic projection and the inclusion map of the Poincar\'{e} disk preserve vertices  \cite{maeda,costa&firer}. Further, it follows from another  classical  result due to M\"{o}bius \cite{mobius,Ssasaki} that any simple closed curve in the projective plane $\RP^2$ has at least three vertices. We show that, up to a rescaling, these are the only  surfaces where every simple closed curve has more than two vertices: 
 
 \begin{thm}\label{thm:1}
The only complete Riemannian surfaces where every simple closed curve has more than two vertices are the space forms with finite fundamental group.
\end{thm}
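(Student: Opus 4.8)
The plan is to prove both implications. For the easy direction, I would first note that, up to rescaling the metric (which multiplies the geodesic curvature of every curve by a constant and hence preserves vertices), the complete space forms with finite fundamental group are exactly $\R^2$, $\S^2$, $\H^2$, and $\RP^2$. Indeed, writing such a surface as $\tilde M/\Gamma$ with $\tilde M\in\{\R^2,\S^2,\H^2\}$ and $\Gamma$ finite, a finite group acting freely and isometrically on the contractible spaces $\R^2$ or $\H^2$ must be trivial, since a finite orbit has a canonical center (barycenter, resp.\ circumcenter) which is then fixed; on $\S^2$ a free isometric action is trivial or generated by the antipodal map, because any orientation preserving element has a fixed point by the Lefschetz formula. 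For the first three surfaces Kneser's four vertex theorem gives at least four vertices, and for $\RP^2$ the theorem of M\"obius gives at least three; thus each of these surfaces has the stated property.

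For the converse I would assume that $M$ is a complete Riemannian surface not isometric (after rescaling) to one of the four above, and produce a simple closed curve with at most two vertices, splitting into two cases according to whether the Gauss curvature $K$ is constant. If $K$ is not constant, then $\nabla K\neq 0$ on a nonempty open set; fixing a point $p$ there, I would expand the geodesic curvature of the geodesic circle $S_\rho(p)$ of small radius $\rho$ in geodesic polar coordinates. The Jacobi equation yields
\[
\kappa_g(\rho,\theta)=\frac1\rho-\frac{K(p)}{3}\,\rho-\frac{\rho^2}{4}\,\lan \nabla K(p),v_\theta\ran+O(\rho^3),
\]
where $v_\theta$ is the unit tangent at angle $\theta$, so the $\theta$-dependent part is, to leading order, the single harmonic $-\tfrac{\rho^2}{4}|\nabla K(p)|\cos(\theta-\theta_0)$. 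Hence for $\rho$ small $d\kappa_g/d\theta$ has exactly two nondegenerate zeros, and $S_\rho(p)$ is an embedded simple closed curve with exactly two vertices.

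If instead $K$ is constant, then $M$ is a space form with infinite fundamental group, so $M=\tilde M/\Gamma$ with $\tilde M\in\{\R^2,\H^2\}$ and $\Gamma$ infinite. I would pick a nontrivial $g\in\Gamma$. When $g$ is a translation (flat case) or hyperbolic it has an invariant axis geodesic; in Fermi coordinates $(u,t)$ along the axis, a small $g$-invariant graph $t=\epsilon\cos(\omega u)$ with $\omega=2\pi/\ell$ matching the translation length $\ell$ has geodesic curvature $\kappa_g=\epsilon(K-\omega^2)\cos(\omega u)+O(\epsilon^2)$, a single nonzero harmonic since $K\le 0<\omega^2$. When $g$ is parabolic one argues identically using an invariant horocycle, of constant geodesic curvature, in place of the axis. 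In every case $d\kappa_g/du$ has exactly two zeros for small $\epsilon$, and the graph stays embedded and projects to a closed curve in $M$, giving a simple closed curve with exactly two vertices. For the non-orientable flat surfaces one takes $g$ to be a glide reflection and the wiggle to be anti-periodic, so that a single harmonic still completes one period along the closed-up projection.

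The main obstacle I expect is the constant-curvature case: one must carry out the perturbation uniformly across the hyperbolic, parabolic, flat, and non-orientable types, verify in the parabolic (horocycle) setting that the induced first-harmonic coefficient is genuinely nonzero, and confirm that the perturbed invariant curve remains embedded so that its projection is simple. By contrast the non-constant case reduces cleanly to the leading-order expansion above, and the classification of finite-$\pi_1$ space forms is standard.
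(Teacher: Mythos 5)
Your overall architecture matches the paper's: reduce to constant curvature by showing that small geodesic circles centered at a non-critical point of $K$ have only two vertices (this is precisely Jackson's lemma, Lemma \ref{lem:jackson}, which the paper cites rather than re-derives), dispose of the finite-$\pi_1$ space forms via Kneser and M\"obius, and in the remaining cases perturb an invariant geodesic or horocycle by a single harmonic. The gap is in the hyperbolic case. You propose to pick an arbitrary nontrivial deck transformation $g$ and perturb its axis; the perturbed curve is indeed embedded in $\H^2$ and $g$-invariant, but its projection to $M$ is simple only if the closed geodesic determined by $g$ is itself simple, and on a general hyperbolic surface the axis of a randomly chosen hyperbolic element projects to a closed geodesic with transverse self-intersections (translates of the axis by deck transformations outside its cyclic stabilizer may cross it). Since the theorem concerns \emph{simple} closed curves, one must first prove that every non-simply connected complete hyperbolic surface contains a simple closed geodesic or a cusp. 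This is Proposition \ref{prop:simpleclosed}, and its proof is the real content of the hyperbolic case: it uses the Hass--Scott disk flow (curve shortening that works without orientability), Borthwick's classification of ends of geometrically finite hyperbolic surfaces, a M\"obius-band argument for non-orientable surfaces, and a confinement argument for surfaces with infinitely many ends. Your proposal lists ``confirm that the projection is simple'' as a final verification, but it is an existence problem that the choice of an arbitrary $g$ does not solve.

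Two smaller points. In the easy direction, M\"obius's theorem applies to \emph{noncontractible} simple closed curves in $\RP^2$; for contractible ones you must lift to a pair of disjoint simple closed curves in $\S^2$ and apply Kneser there, as done in Section \ref{subsec:elliptic}. And in the parabolic subcase, an invariant horocycle projects to an embedded curve only when taken sufficiently far into a cusp; this is built into the definition of a cusp but is not automatic for an arbitrary parabolic element.
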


Thus the simply connected space forms are the only complete Riemannian surfaces where Kneser's four vertex theorem holds, and real projective planes are the only other surfaces where every simple closed curve has at least three vertices.
Another classification result we obtain  in this work uses the extension of Kneser's theorem to self-intersecting curves by Pinkall  \cite{pinkall} who showed that every closed curve in $\R^2$ (and hence $\H^2$ and $\S^2$) which bounds a compact immersed region must have at least four vertices. Our next theorem  shows that this property also characterizes the simply connected space forms:

\begin{thm}\label{thm:2}
The only complete Riemannian surfaces where every  closed curve which bounds a compact immersed surface has more than two vertices are the simply connected space forms.   
\end{thm}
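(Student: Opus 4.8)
The plan is to prove the two implications separately. That every closed curve bounding a compact immersed surface in a simply connected space form has more than two vertices is precisely Pinkall's theorem for $\R^2$, together with its transfer to $\S^2$ and $\H^2$ via the vertex-preserving stereographic projection and Poincar\'e disk inclusion recorded in the introduction; so this direction requires no new work. The substance lies in the converse, which I would establish in contrapositive form: if $M$ is a complete Riemannian surface that is \emph{not} a simply connected space form, then $M$ carries a closed curve with at most two vertices bounding a compact immersed surface.

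First I would reduce to constant curvature. Suppose the Gaussian curvature $K$ is non-constant, so that $\nabla K(p)\neq 0$ at some point $p$. In geodesic polar coordinates about $p$ the metric is $dr^2 + f(r,\theta)^2\,d\theta^2$, where $f$ solves the Jacobi equation $f_{rr}+Kf=0$ with $f(0,\theta)=0$, $f_r(0,\theta)=1$, and the geodesic curvature of the geodesic circle $C_r(p)$ is $\kappa_g=f_r/f$. A short Taylor expansion gives
\[
\kappa_g(r,\theta)=\frac1r-\frac{K(p)}{3}\,r-\frac{\langle \nabla K(p),v(\theta)\rangle}{4}\,r^2+O(r^3),
\]
where $v(\theta)$ is the unit vector at angle $\theta$. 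The leading $\theta$-dependent term is proportional to $\cos(\theta-\theta_0)$, a Morse function with exactly one maximum and one minimum; dividing $\partial_\theta\kappa_g$ by $r^2$ and letting $r\to 0$, the nondegeneracy of these critical points forces $\theta\mapsto\kappa_g(r,\theta)$ to have exactly two critical points for all sufficiently small $r$. Thus $C_r(p)$ is a closed curve with exactly two vertices bounding an embedded disk, contradicting the hypothesis. Hence $K$ is constant and $M$ is a space form.

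It remains to show that a non-simply-connected space form violates the hypothesis, and this is the step I expect to be the main obstacle. The difficulty is that a two-vertex curve can never bound a compact immersed surface that lifts to the universal cover $\tilde M\in\{\R^2,\S^2,\H^2\}$: such a lift would itself bound a compact immersed surface in $\tilde M$, since a local isometry preserves both the immersion and the geodesic curvature, and Pinkall's theorem would then force at least four vertices. One must therefore use the topology of $M$ to produce a filling that does not lift. As $M$ is not simply connected it contains a non-contractible simple closed curve $c$ of constant geodesic curvature (a closed geodesic, a horocycle around a cusp, or the core of a M\"obius band, according to the type of $M$). When $c$ is one-sided, the boundary $\gamma_0$ of a tubular neighborhood $N(c)$ is a single curve of constant geodesic curvature bounding the M\"obius band $N(c)$, which is non-orientable and hence admits no immersion into the orientable $\tilde M$; when $c$ is two-sided (in particular when $M$ is orientable) I would instead band-sum the two boundary circles of $N(c)$ by a band running once more around $c$, obtaining a single curve $\gamma_0$ bounding an immersed once-punctured torus whose induced $\pi_1$ contains $[c]\neq 0$ and which therefore does not lift either.

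In both cases $\gamma_0$ has constant or piecewise constant geodesic curvature, so the final and hardest task is to perturb $\gamma_0$---by letting the tubular distance vary with exactly two critical points---into a nearby curve $\gamma$ whose geodesic curvature is Morse with precisely two critical points, while gluing a thin immersed collar to extend the filling across the perturbation. The hard part is exactly this last step: arranging the geodesic curvature to have exactly two critical points subject to the closing-up conditions, and verifying that the perturbed surface remains an immersion. Once this is carried out, $\gamma$ is a two-vertex curve bounding a compact immersed surface, the desired contradiction, and the proof is complete.
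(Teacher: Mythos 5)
Your overall architecture is right and matches the paper's up to a point: Pinkall's theorem (transferred by the vertex-preserving maps of Lemma \ref{lem:maeda}) handles the simply connected space forms, and your geodesic-circle expansion is exactly Jackson's lemma (Lemma \ref{lem:jackson}), which the paper simply cites to reduce to constant curvature. Your observation that a two-vertex curve can only bound a compact immersed surface whose inclusion is not $\pi_1$-trivial is also correct and explains why the construction must be genuinely topological. But the converse direction --- actually producing, in each non-simply-connected space form, a closed curve with only two vertices bounding a compact immersed surface --- is the entire content of the theorem, and you have not produced it; worse, the route you sketch fails outright in the elliptic case. In $\RP^2$ the one-sided geodesic is a projective line $c$; the boundary $\gamma_0$ of its M\"obius-band neighborhood lifts homeomorphically to a latitude circle in $\S^2$, so $\gamma_0$ and every $\C^1$-small perturbation of it is a \emph{simple} closed curve lifting to a simple closed curve in $\S^2$, and Kneser's theorem forces at least four vertices on any such perturbation. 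No variation of the tubular distance can evade this. The paper's curve in $\RP^2$ (Figure \ref{fig:loop}) is not a perturbed equidistant curve at all: it is a non-small curve with a self-intersection and inflection points on the equator.

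In the two-sided cases your band-summed curve $\gamma_0$ has piecewise constant curvature with jumps at the band, and the claim that it can be smoothed and perturbed so that the curvature becomes Morse with exactly two critical points is precisely the step you defer --- and it is where all the difficulty lives. Generic smoothings of the transition regions introduce additional critical points of curvature, and there is no argument that they can all be cancelled against one another while the filling remains immersed. The paper's solution is an explicit and rather delicate curve on the flat cylinder (Figure \ref{fig:cylinder}): a curve winding five times around, given in closed form, whose vertex count is verified by inverting it to the polar curve $r(\theta)=\cos(\theta/5)$ and computing $\kappa'$ explicitly. That single example is then transplanted to every other case --- to all flat quotients via the covering by a cylinder, to hyperbolic surfaces with a cusp because cusps are $\C^\infty$-asymptotic to flat cylinders (and the two vertices are nondegenerate, hence stable), and to hyperbolic surfaces with a closed geodesic by reproducing the pattern along the axis using the homotheties of $\H^2$. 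Without an analogue of this explicit construction, or a genuinely worked-out perturbation argument for your $\gamma_0$, the proof is incomplete.
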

 
 Finally one may ask what are all surfaces which satisfy the condition of the last theorem if the word ``immersed" is replaced by ``embedded". Obviously, surfaces of  the above theorems would then fall into that category, but there are  more examples:

\begin{thm}\label{thm:3}
The only complete Riemannian surfaces where every  closed curve which bounds a compact \emph{embedded surface} has more than two vertices are  orientable space forms of genus zero, flat tori, and rescalings of $\RP^2$. 
\end{thm}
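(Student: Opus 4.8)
The plan is to reduce to the case of constant curvature and then to classify space forms, combining Kneser's and M\"obius's theorems with a purely topological analysis of which closed curves can bound a compact embedded surface. Exactly as in the converse directions of Theorems \ref{thm:1} and \ref{thm:2}, if the Gaussian curvature $K$ is nonconstant, then near a point $p$ with $\nabla K(p)\neq 0$ every sufficiently small geodesic circle bounds an embedded disk and has geodesic curvature behaving to leading order like $a+b\cos\theta$ with $b\neq 0$, hence has exactly two vertices; so a surface with the stated property must be a complete space form, with universal cover $\tilde M\in\{\S^2,\R^2,\H^2\}$ after rescaling. The crucial observation is then that \emph{every contractible simple closed curve in a space form has at least four vertices}: such a curve $\gamma$ bounds an embedded disk $D\subset M$, which lifts isometrically to an embedded disk in $\tilde M$, so $\gamma$ lifts to a simple closed curve with the same geodesic curvature function, to which Kneser's theorem (valid on $\S^2$, $\R^2$, and $\H^2$) applies. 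Consequently the property can fail only through a \emph{noncontractible} simple closed curve bounding a compact embedded surface.

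For the forward direction I would show that on each listed surface every simple closed curve bounding a compact embedded surface is in fact contractible, so that the previous paragraph applies and gives at least four vertices. If $M$ is orientable of genus zero, a compact embedded subsurface with a single boundary circle has genus zero and one boundary component, hence is a disk, so its boundary is contractible. If $M$ is a flat torus, an Euler--characteristic count shows that any separating simple closed curve cuts $M$ into a disk and a one-holed torus and therefore bounds a disk, while a nonseparating curve is homologically nontrivial and bounds no compact surface. If $M$ is a rescaling of $\RP^2$, the unique nontrivial free homotopy class of simple closed curves is represented by one-sided curves, which cannot bound an embedded surface; so again only contractible curves bound.

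For the converse I would exhibit, on every remaining space form, a noncontractible simple closed curve bounding a compact embedded surface and realize it with exactly two vertices. If $M$ is orientable of genus $\geq 1$ but not the flat torus, there is a separating simple closed geodesic cutting off a compact subsurface of positive genus (for noncompact $M$ one separates a handle from an end; for a closed surface of genus $\geq 2$ one splits off a one-holed subsurface); this geodesic is two-sided, and pushing it off along a normal field by a height function with a single maximum and minimum produces, to first order, a geodesic curvature whose derivative vanishes at exactly two points, giving a two-vertex embedded representative in the same isotopy class, which still bounds. If $M$ is nonorientable and not $\RP^2$, I would take an orientation-reversing simple closed geodesic $c_0$ and let $\gamma$ be the boundary of a tubular M\"obius neighborhood of $c_0$: then $\gamma$ is two-sided, bounds that compact M\"obius band, and is noncontractible (otherwise $M$ would be $\RP^2$); varying the radius of the tube by a one-maximum, one-minimum function again yields a two-vertex representative.

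I expect the main obstacle to be this last realization step: producing, in each prescribed bounding isotopy class, a simple closed curve whose geodesic curvature has \emph{exactly} two critical points, and in particular handling the nonorientable case, where the relevant geodesic $c_0$ is one-sided, so there is no global normal field and one must instead perturb the radius of a M\"obius tube about $c_0$ (whose equidistant boundary has constant geodesic curvature). Controlling the higher-order terms in the geodesic curvature of these normal graphs so that no spurious vertices appear, while keeping the curves embedded and within their isotopy classes, is the technical heart of the argument; it is the same mechanism used in the converse of Theorem \ref{thm:1}, here upgraded to guarantee in addition that the two-vertex curves bound compact embedded surfaces.
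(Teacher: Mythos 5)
Your overall strategy coincides with the paper's: reduce to constant curvature via Jackson's lemma, handle the listed surfaces by showing that every curve bounding a compact embedded surface is contractible and then lifting to the universal cover where Kneser's theorem applies, and on the remaining space forms perturb a suitable closed geodesic, or the boundary of a M\"obius tube about a one-sided geodesic, into a two-vertex curve that still bounds. The forward direction (including the observation that one-sided curves cannot bound, and the Euler characteristic count on the torus) and the nonorientable construction are sound and agree with what the paper does.

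The genuine gap is in the orientable hyperbolic case of positive genus: you assert that such a surface always contains a \emph{separating simple closed geodesic} cutting off a compact subsurface of positive genus. This fails, for instance, for a complete hyperbolic once-punctured torus: the only separating isotopy class that cuts off a handle is the peripheral one encircling the cusp, its holonomy is parabolic, and so it contains no closed geodesic --- under any length-nonincreasing isotopy the curve escapes into the cusp. The paper's corresponding lemma establishes precisely the needed dichotomy: running the Hass--Scott flow on the handle-cutting curve produces either a simple closed geodesic or a \emph{horocycle} bounding a compact region homeomorphic to a one-holed torus, and Lemma \ref{lem:approx} then perturbs either one to a simple closed curve with only two vertices. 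Your normal-graph perturbation works just as well on a horocycle (which has constant geodesic curvature; in the upper half-plane model the perturbation is an explicit sine graph over a horizontal line, as in Section \ref{sec:hyperbolic}), so the repair fits inside your framework; but as written the step ``there is a separating simple closed geodesic'' is false, and the cusp/horocycle alternative must be added for the argument to cover all complete hyperbolic surfaces of positive genus.
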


We recall that an orientable surface of genus zero is one that is homeomorphic to $\S^2$ minus a totally disconnected subset \cite{richards}.  The above theorems are proved in the next three sections below. The proofs of Theorems \ref{thm:1} and \ref{thm:3} employ some basic curve shortening and certain perturbation results for closed geodesics or horocycles in hyperbolic surfaces. An important lemma utilized here is a result of Jackson \cite{jackson:vertices} who showed that Kneser's four vertex property holds only in surfaces with constant curvature. The proof of Theorem \ref{thm:2} also uses this lemma, together with an explicit example of a rather remarkable cylindrical curve which has only two vertices but bounds a compact immersed surface (Figure \ref{fig:cylinder}). A general result for perturbing closed geodesics to curves with only two vertices is discussed in the appendix.

Four vertex theorems have spawned a vast and diverse literature since the first version of the theorem was
 proved in 1909 by Mukhopadhyaya \cite{mukhopadhyaya}, who showed that convex planar curves have four vertices.  This is the only version mentioned in nearly all differential geometry textbooks, with the exceptions of \cite{guggenheimer, montiel&ros} where the more general theorem of Kneser is discussed, see also \cite{jackson:bulletin, osserman:vertex, heil}.  Pinkall's paper \cite{pinkall} and related work \cite{umehara,cairns} offer other more general proofs as well, based on a number of ideas. For more recent proofs see \cite{angenent,sussman} which use curvature flow and Sturm theory. Another recent development is concerned with a converse of Kneser's theorem \cite{dahlberg, gluck:vertex}. There are also interesting generalizations to space curves \cite{sedykh:vertex, umehara2, thorbergsson&umehara},  connections with contact geometry \cite{arnold:flattening, uribe-vargas}, polygonal analogues \cite{pak:book}, and a version for surfaces with boundary \cite{ghomi:verticesA}. See \cite{rmbc} for a physical application, and \cite{gluck:notices, ovsienko&tabachnikov} for some more references and historical remarks.

\section{Proof  of Theorem \ref{thm:1}}\label{sec:thm1}

We begin by recording the result of Jackson mentioned above:

\begin{lem}[\cite{jackson:vertices}]\label{lem:jackson}
Let $M$ be a Riemannian surface with curvature $K$ and let $p$ be a point of $M$. Suppose that $p$ is not a stationary point of $K$, i.e, $dK_p\neq 0$. Then there exists an $\epsilon>0$ such that for all $0<r\leq \epsilon$, the metric circle of radius $r$ centered at $p$ has only two vertices.\qed
\end{lem}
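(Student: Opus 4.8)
The plan is to work in geodesic polar coordinates $(r,\theta)$ centered at $p$, chosen with $\epsilon$ smaller than the injectivity radius at $p$ so that each circle $r=\mathrm{const}\le\epsilon$ is a smooth simple closed curve. In these coordinates the metric takes the form $ds^2=dr^2+f(r,\theta)^2\,d\theta^2$, where for each fixed $\theta$ the function $f$ solves the Jacobi equation $\partial_r^2 f+Kf=0$ with $f(0,\theta)=0$ and $\partial_r f(0,\theta)=1$. The metric circle of radius $r$ is the coordinate curve $\theta\mapsto(r,\theta)$, and its geodesic curvature is given by the classical expression $\kappa_g(r,\theta)=\partial_r f/f$. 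Since $\theta$ is a regular parameter, the vertices of this circle are precisely the critical points of the function $\theta\mapsto\kappa_g(r,\theta)$, so the entire lemma reduces to counting these critical points for small $r$.

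Next I would extract the leading $\theta$-dependence of $\kappa_g$. Writing $K(r,\theta)=K(p)+g(\theta)\,r+O(r^2)$ along the geodesic ray in direction $\theta$, where $g(\theta)=dK_p(e_\theta)$ is the derivative of $K$ at $p$ in the unit direction $e_\theta$ corresponding to $\theta$, and solving the Jacobi equation order by order gives $f(r,\theta)=r-\tfrac{K(p)}{6}r^3-\tfrac{g(\theta)}{12}r^4+O(r^5)$. Dividing $\partial_r f$ by $f$ then yields
\[
\kappa_g(r,\theta)=\frac1r-\frac{K(p)}{3}\,r-\frac{g(\theta)}{4}\,r^2+O(r^3).
\]
The first two terms are independent of $\theta$, so differentiating in $\theta$ gives $\partial_\theta\kappa_g(r,\theta)=-\tfrac14 g'(\theta)\,r^2+O(r^3)$; that is, the $\theta$-variation of $\kappa_g$ is governed to leading order by $g'$.

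The crucial use of the hypothesis $dK_p\neq 0$ comes here: it forces $g(\theta)=|dK_p|\cos(\theta-\theta_0)$ for some $\theta_0$, so that $g'(\theta)=-|dK_p|\sin(\theta-\theta_0)$ has exactly two zeros on the circle, at $\theta_0$ and $\theta_0+\pi$, both nondegenerate. I would then set $\Phi(r,\theta):=4r^{-2}\partial_\theta\kappa_g(r,\theta)$, which by the smoothness of $f$ in geodesic polar coordinates extends to a $C^1$ function near $\{0\}\times\mathbf{S}^1$ with $\Phi(0,\theta)=-g'(\theta)$. Since $\Phi(0,\cdot)$ has only simple zeros, a routine perturbation argument completes the count: the implicit function theorem produces exactly one zero of $\Phi(r,\cdot)$ near each of $\theta_0$ and $\theta_0+\pi$ for small $r$, while on the complement of fixed neighborhoods of these two points $|\Phi(0,\cdot)|$ is bounded below, so $\Phi(r,\cdot)$ can have no further zeros there. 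Hence for all sufficiently small $r$ the circle has exactly two vertices.

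I expect the main obstacle to be this final step: rigorously controlling the error terms so as to obtain \emph{exactly} two vertices rather than merely at least two. The delicate issue is the regularity of $\kappa_g$ up to the coordinate singularity at $r=0$, where one must justify that $r^{-2}\partial_\theta\kappa_g$ extends continuously (indeed in $C^1$) across $r=0$ with limit $-g'(\theta)$. For this one invokes the smooth dependence of solutions of the Jacobi equation on the parameter $\theta$, together with the classical fact that $f(r,\theta)$ admits a genuine Taylor expansion in $r$ whose coefficients are smooth in $\theta$. Once this regularity is established, the nondegeneracy of the two zeros of $g'$ makes the perturbation argument robust and the count exact.
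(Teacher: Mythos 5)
Your proposal is correct, but there is nothing in the paper to compare it against: the paper states this lemma with an immediate \verb|\qed| and defers entirely to Jackson's 1945 article, giving no proof of its own. What you have written is a legitimate self-contained proof, and it is in fact the classical route (and essentially Jackson's): expand the metric in geodesic polar coordinates $ds^2=dr^2+f^2\,d\theta^2$ with $f$ solving the Jacobi equation, obtain
\[
f(r,\theta)=r-\tfrac{K(p)}{6}r^3-\tfrac{g(\theta)}{12}r^4+O(r^5),
\qquad
\kappa_g=\frac{\partial_r f}{f}=\frac1r-\frac{K(p)}{3}r-\frac{g(\theta)}{4}r^2+O(r^3),
\]
so that the first $\theta$-dependent term is governed by $g(\theta)=dK_p(e_\theta)=|dK_p|\cos(\theta-\theta_0)$; I checked the coefficients $-K(p)/3$ and $-g/4$ and they are right. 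Your identification of the genuinely delicate point is also accurate: one must know that the remainder is $O(r^3)$ \emph{uniformly in the $C^1$-norm with respect to $\theta$} before dividing by $r^2$ and invoking the implicit function theorem at the two nondegenerate zeros of $g'$. This follows, as you say, from the joint smoothness of $f$ (equivalently of the exponential map) in $(r,\theta)$ together with Taylor's theorem with integral remainder applied to $r\,\partial_\theta\kappa_g$, whose expansion starts at order $r^3$ because the $1/r$ and $K(p)r/3$ terms are $\theta$-independent. Two cosmetic remarks: the final count "exactly two" also uses that a nonconstant smooth function on a circle has at least two critical points, so your upper bound of two zeros of $\partial_\theta\kappa_g$ is automatically attained; and the sign convention in $\kappa_g=\partial_r f/f$ is immaterial since only critical points are being counted.
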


Thus we may confine our attention to complete Riemannian surfaces $M$ of constant curvature $K$, or $2$-dimensional \emph{space forms},  as far as proving Theorem \ref{thm:1} is concerned. Furthermore, we may assume that $M$ is not simply connected due to the following fact: let $\S^2\subset \R^3$ denote the unit sphere and $\H^2\subset\R^2$ be the Poincar\'{e}  half-plane with its standard metric of constant curvature $-1$; then we have:

\begin{lem}[\cite{maeda}]\label{lem:maeda}
The stereographic projection $\pi\colon \S^2-\{(0,0,1)\}\to\R^2$ and the inclusion map $i\colon\H^2\to\R^2$ preserve the sign of the derivative of the geodesic curvature of curves.\qed
\end{lem}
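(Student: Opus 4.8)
The plan is to exploit the fact that both $\pi$ and $i$ are conformal and to reduce the statement to a single computation about how geodesic curvature transforms under a conformal change of metric. Writing the round and hyperbolic metrics as $\lambda^2 g_0$, where $g_0$ is the Euclidean metric, stereographic projection corresponds to $1/\lambda=(1+r^2)/2$ (with $r$ the Euclidean distance to the origin), while the Poincar\'e half-plane corresponds to $1/\lambda=y$. In each case $\lambda$ is a positive function on a subset of $\R^2$, and I would regard a curve $\gamma$ and its image under $\pi$ or $i$ as literally the same planar curve carrying the two metrics $\lambda^2 g_0$ and $g_0$.

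First I would record the conformal transformation law for signed geodesic curvature. If $\gamma$ has Euclidean unit tangent $T$, unit normal $N$, arc length $s$, and Euclidean geodesic curvature $\kappa_0$, then its geodesic curvature in $\lambda^2 g_0$ is $\tilde\kappa=\lambda^{-1}\bigl(\kappa_0-N(\phi)\bigr)$, where $\phi=\log\lambda$ and $N(\phi)$ is the derivative of $\phi$ in the direction $N$; the sign is pinned down by testing a model curve, e.g.\ by checking that Euclidean semicircles meeting the $x$-axis orthogonally are hyperbolic geodesics. Since arc length in $\lambda^2 g_0$ satisfies $d\tilde s=\lambda\,ds$, I would then differentiate $\tilde\kappa$ with respect to $\tilde s$, using the Frenet equations $T'=\kappa_0 N$, $N'=-\kappa_0 T$ together with $\tfrac{d}{ds}\nabla\phi=\operatorname{Hess}\phi\cdot T$. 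A direct calculation gives
\[
\frac{d\tilde\kappa}{d\tilde s}=\frac{1}{\lambda^2}\Bigl(\frac{d\kappa_0}{ds}+T(\phi)N(\phi)-\operatorname{Hess}\phi(T,N)\Bigr),
\]
the crucial feature being that all terms linear in $\kappa_0$ cancel automatically, for any conformal factor.

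The main point, and the one place where the special geometry of these two metrics enters, is to show that the surviving correction $T(\phi)N(\phi)-\operatorname{Hess}\phi(T,N)$ vanishes. I would rewrite it invariantly: since $\operatorname{Hess}(e^{-\phi})=e^{-\phi}\bigl(d\phi\otimes d\phi-\operatorname{Hess}\phi\bigr)$ and $e^{-\phi}=1/\lambda$, this correction equals $\lambda\,\operatorname{Hess}(1/\lambda)(T,N)$. Thus it suffices to verify that $\operatorname{Hess}(1/\lambda)$ is pointwise a multiple of $g_0$, for then its value on the orthonormal pair $(T,N)$ is zero. This is immediate in both cases: for the hyperbolic metric $1/\lambda=y$ is affine, so $\operatorname{Hess}(1/\lambda)=0$, while for stereographic projection $1/\lambda=(1+r^2)/2$ gives $\operatorname{Hess}(1/\lambda)=g_0$. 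Consequently $\tfrac{d\tilde\kappa}{d\tilde s}=\lambda^{-2}\tfrac{d\kappa_0}{ds}$, and since $\lambda^{-2}>0$ the sign of the derivative of geodesic curvature is preserved; in particular critical points of geodesic curvature correspond under either map.

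I expect the genuinely delicate step to be the bookkeeping in the differentiation and the cancellation of the $\kappa_0$-terms, and, conceptually, recognizing that the leftover correction is governed by $\operatorname{Hess}(1/\lambda)$: it is precisely the condition that $1/\lambda$ have Hessian proportional to $g_0$ --- a hallmark of the constant-curvature models --- that forces the correction to vanish. One could instead argue more synthetically, using that both maps are restrictions of Möbius transformations and hence preserve the contact of curves with circles, but the conformal computation above seems cleaner and more self-contained.
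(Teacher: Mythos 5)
Your computation is correct. The conformal law $\tilde\kappa=\lambda^{-1}(\kappa_0-N(\phi))$, the cancellation of all terms linear in $\kappa_0$ after differentiating with respect to $\tilde s$ (using $dN/ds=-\kappa_0 T$ and $d\tilde s=\lambda\,ds$), and the identification of the leftover correction with $\lambda\operatorname{Hess}(1/\lambda)(T,N)$ all check out; and indeed $1/\lambda=y$ for the half-plane and $1/\lambda=(1+r^2)/2$ for stereographic projection have Hessians pointwise proportional to the flat metric, so the correction vanishes and $d\tilde\kappa/d\tilde s=\lambda^{-2}\,d\kappa_0/ds$. Your route is, however, genuinely different from the paper's: the paper does not prove this lemma but quotes it from Maeda (crediting the spherical case to Kneser and passing from the Poincar\'e disk to the half-plane by a M\"obius transformation), and its only explanation is the synthetic remark that both maps send circles to circles while vertices are the points of third-order contact with the osculating circle. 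The synthetic viewpoint buys brevity and applies at once to all M\"obius transformations; your computation buys a self-contained proof of the stronger sign statement (not merely that vertices go to vertices) and isolates the exact hypothesis on the conformal factor --- $\operatorname{Hess}(1/\lambda)$ a pointwise multiple of $g_0$ --- that makes it work, which is precisely the feature singling out the constant-curvature models and resonates with Jackson's Lemma \ref{lem:jackson}. The one point worth making explicit is that, the two metrics being conformal, the same choice of normal direction serves for both, so the signed curvatures being compared are consistently oriented and the final identity really does preserve the sign of $\kappa'$ and not just its zero set.
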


Thus Kneser's theorem holds in all simply connected $2$-dimensional  space forms.
The property of the stereographic projection mentioned above was already known to Kneser \cite{kneser}, and  the property of the inclusion map for the Poincar\'{e} disk was proved by Maeda \cite{maeda}; but the Poincar\'{e} disk and half-plane are equivalent up to a M\"{o}bius transformation, which establishes the corresponding fact for the Poincar\'{e} half-plane, since M\"{o}bius transformations  preserve vertices \cite{pinkall,heil}. Essentially, these observations are consequences of the fact that the stereographic projection and the inclusion map of the upper half-plane send circles to circles, and the vertices occur when the curve has third order contact with its osculating circle.

It remains  to consider non-simply connected $2$-dimensional space forms $M$. We may assume, after a rescaling, that the curvature $K$ of $M$ is $0$, $1$, or $-1$. Then by the  Hopf-Killing  theorem \cite{stillwell,ratcliffe}, $M=X/G$ where  $X=\R^2$, $\S^2$, or $\H^2$ and $G$ is a discreet subgroup of isometries of $X$ which acts freely and properly discontinuously on $X$. In particular, the projection $\pi\colon X\to X/G=M$ is a Riemannian covering map. We also recall that $G$ is isometric to the fundamental group $\pi_1(M)$. Indeed, for any point $o\in M$, $\ol o\in X$ with $\pi(\ol o)=o$, and  closed curve $\gamma\colon [0,L]\to M$ with $\gamma(0)=o$, if  $\ol\gamma\colon [0,L]\to X$ is the lifting of $\gamma$ with $\ol\gamma(0)=\ol o$, then there exists a unique element $g\in G$ such $g(\ol\gamma(0))=\ol\gamma(L)$. This correspondence, which will be utilized a number of times below, establishes the isomorphism between $G$ and $\pi_1(M)$.

\subsection{The elliptic case}\label{subsec:elliptic}
If $K=1$, then $M$ is  the real projective plane $\RP^2=\S^2/\{\pm 1\}$. By a theorem of M\"{o}bius \cite{mobius,Ssasaki}, see Note \ref{note:mobius} below for other references, any simple closed noncontractible curve $\Gamma$ in $\RP^2$ has at least three \emph{inflection points}, i.e., points where the geodesic curvature vanishes. But there must be at least one vertex between every pair of inflections. Thus $\Gamma$ must have at least three vertices. On the other hand, if $\Gamma$ is  contractible, then it lifts to a pair of simple closed curves in $\S^2$, each of which must have four vertices by Kneser's theorem. Since the covering map is one-to-one on each of these curves, it then follows that $\Gamma$ must have at least four vertices. So we conclude then that all simple closed curves in $\RP^2$ must have at least three vertices. (Note that in a nonorientable surface, the geodesic curvature in only well-defined locally and up to a sign; however, this is enough to allow one to talk about vertices and inflection points.) 

In the remaining cases we construct explicit examples of simple closed curves with only two vertices, by perturbing geodesics or horocycles of $M$.

\subsection{The parabolic case}\label{subsec:parabolic}
If $K=0$, then $M=\R^2/G$ where there are exactly four types of possibilities for $G$ corresponding to the cases of  cylinder, twisted cylinder, torus, or Klein bottle \cite{stillwell}. In each of these cases $M$ contains a simple closed geodesic $\Gamma$, as may be easily seen by looking at the fundamental region of these surfaces which is either a rectangle or an infinite strip in $\R^2$. Let $\gamma\colon\R/L\to M$ be a parametrization of $\Gamma$ by arclength (where $L$ denotes the length of $\Gamma$), and $\ol\gamma$ be a lifting of $\gamma$ to $\R^2$. Then $\ol\gamma$ traces a line which we may assume to be the $x$-axis. Let $g\in G$ be the unique element such that $g(\ol\gamma(0))=\ol\gamma(L)$.  Then $g$ is either  the ``translation" $(x,y)\mapsto (x+L,y)$ or the ``glide reflection" $(x,y)\mapsto (x+L,-y)$ and we define, respectively,  
$$
\tilde\gamma(t):=\left(t, \lambda \sin\left(\frac{2\pi t}{L}\right)\right) \quad\text{or} \quad 
\tilde\gamma(t):=\left(t, \lambda \sin\left(\frac{\pi t}{L}\right)\right),
$$
 for some $\lambda>0$. Then the image of $\tilde\gamma$ is invariant under the action of $g$. Consequently, if $\pi\colon \R^2\to M$ denotes the covering map, then $\pi\circ\tilde\gamma$ traces a smooth closed curve in $M$ with only one or two vertices. Further note that, as $\lambda\to 0$, $\tilde\gamma$ converges to $\ol\gamma$, and consequently $\pi\circ\tilde\gamma$ converges to $\pi\circ\ol\gamma=\gamma$ with respect to the $\C^1$-norm. So since $\gamma$ is simple, it follows that $\pi\circ\tilde\gamma$ will be simple as well for sufficiently small $\lambda$.

\subsection{The hyperbolic case}\label{sec:hyperbolic}
If $K=-1$, or $M$ is a hyperbolic surface, we need to establish a basic structure theorem first.  
 Recall that an \emph{end} of a surface is a nested sequence of subsets  which eventually lie outside any given compact subset. Each element of this sequence is called an \emph{end representative}, and two ends are equivalent if each representative of one end lies in a representative of the other.
A \emph{cusp} of a hyperbolic surface is an end with a representative  which is isometric to a representative $C$ of the ``thin" end of the \emph{parabolic cylinder} $\H^2/G$ where $G$ is generated by the parabolic translations $(x,y)\mapsto (x+L,y)$; more explicitly, a cusp representative  may be defined as
$$
C=C(h):=\big\{(x,y)\in \H^2\mid y>h\big\}/G,
$$
 for some $h>0$. Alternatively, a cusp may be visualized as an end of the tapering surface of revolution in $\R^3$ known as the \emph{pseudosphere}. 

\begin{prop}\label{prop:simpleclosed}
Every non-simply connected complete hyperbolic surface  contains a simple closed geodesic or a cusp.
\end{prop}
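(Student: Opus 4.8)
The plan is to use the structure of a non-simply connected hyperbolic surface $M = \H^2/G$ where $G$ is a nontrivial discrete group of isometries acting freely and properly discontinuously. Since $M$ is not simply connected, $G$ is nontrivial, so I can pick some nonidentity element $g \in G$. The key idea is a dichotomy based on the type of isometry: a nonidentity orientation-preserving isometry of $\H^2$ is either hyperbolic (two fixed points on the boundary circle at infinity, with an invariant geodesic axis) or parabolic (one fixed point at infinity, with invariant horocycles); elliptic isometries are excluded because they have an interior fixed point and $G$ acts freely. I would want to produce either a simple closed geodesic (from the hyperbolic case) or a cusp (from the parabolic case).

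Let me think about how to classify elements. I'd look at elements of $G$ and consider whether $G$ contains a hyperbolic element. If $g \in G$ is hyperbolic, it has an invariant axis $A \subset \H^2$, a geodesic, and $g$ translates along $A$ by some distance. The image $\pi(A)$ in $M$ is a closed geodesic, but it may not be simple. The subtlety is ensuring simplicity: $\pi(A)$ is simple precisely when no other element of $G$ maps $A$ to a geodesic that crosses it transversally. This is where I expect the main obstacle. In general the shortest closed geodesic in a given free homotopy class will be simple — this is a classical fact, since a length-minimizing geodesic in its homotopy class cannot have transverse self-intersections (one could shorten by smoothing a crossing). So I would pass to a homotopically nontrivial loop realized by a hyperbolic $g$ and take the length-minimizing geodesic in its conjugacy class; the minimizer exists because $g$ is hyperbolic (the displacement function attains its minimum on the axis) and is simple by the no-shortcut argument.

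The remaining case is when $G$ contains no hyperbolic element. Then every nonidentity element is parabolic (elliptic being ruled out by freeness). Pick a parabolic $g \in G$ fixing a point $\xi$ on $\partial \H^2$; after conjugating by an isometry of $\H^2$ I may assume $\xi = \infty$ in the half-plane model, so $g$ is the translation $(x,y) \mapsto (x+L, y)$ for some $L > 0$. The horoballs $\{y > h\}$ are invariant under $g$. The content here is to show that for $h$ sufficiently large the horoball $\{y > h\}$ embeds into $M$ under $\pi$, i.e., that $\pi$ restricted to it is injective modulo the action of $\langle g \rangle$; this is the Margulis-lemma-type phenomenon guaranteeing that high horoballs are disjoint from their $G$-translates except by powers of $g$. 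Granting this, the quotient $\{y>h\}/\langle g\rangle$ maps isometrically into $M$ and is exactly the cusp representative $C(h)$ described above, yielding a cusp. I expect the injectivity of the high horoball to be the most delicate point: I would argue that if some $g' \in G \setminus \langle g\rangle$ moved a high point into the horoball, then $g'$ together with $g$ would generate a subgroup with small-displacement elements near $\infty$, forcing a hyperbolic or elliptic element or violating discreteness/freeness, contradicting our standing assumptions.

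In summary, the structure of the argument is: reduce to a nontrivial $g \in G$; split on whether some element is hyperbolic; in the hyperbolic case extract a simple closed geodesic as a length-minimizer in its free homotopy class; in the purely parabolic case extract a cusp from a sufficiently high invariant horoball. The main obstacle is establishing the two simplicity/embedding claims — that the minimizing geodesic has no self-intersections, and that a high horoball injects into $M$ — both of which rest on the basic geometry of $\H^2$ and the freeness and discreteness of the $G$-action.
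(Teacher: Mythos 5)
Your hyperbolic branch contains a genuine gap, and it is the crux of the whole argument. The ``classical fact'' you invoke --- that the length-minimizing geodesic in a free homotopy class must be simple because a transverse self-crossing could be smoothed to shorten it --- is false: smoothing a crossing changes the free homotopy class, so the shortened curve no longer competes with the original and no contradiction with minimality arises. In a hyperbolic surface each essential free homotopy class carried by a hyperbolic element contains a \emph{unique} closed geodesic, which is therefore automatically the minimizer in its class, and it is very often non-simple (the figure-eight geodesic on a hyperbolic pair of pants, for instance). Worse, the dichotomy itself cannot be repaired: the complete finite-area thrice-punctured sphere has an abundance of hyperbolic deck transformations yet contains \emph{no} simple closed geodesic at all, since every essential simple closed curve on it is peripheral to a puncture and hence lies in a parabolic class with no geodesic representative. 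So ``$G$ contains a hyperbolic element'' does not imply ``$M$ contains a simple closed geodesic''; in such examples one must still produce a cusp. A workable version of your idea starts instead from an essential \emph{simple} closed curve (these exist whenever $\pi_1(M)\neq 1$) and exploits the fact that lifts of a simple curve have pairwise unlinked endpoint pairs on the circle at infinity, so the corresponding axes are pairwise disjoint and the geodesic representative, \emph{when it exists}, is simple; but one must then separately treat classes represented by parabolics and, on surfaces of infinite type, the possibility that a minimizing sequence escapes every compact set.

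The purely parabolic case of your sketch is sound and can in fact be shortened: by discreteness, two parabolics with distinct fixed points generate a group containing a hyperbolic element, so if $G\setminus\{1\}$ consists only of parabolics they all share one fixed point, $G$ is conjugate into the group of horizontal translations and, being discrete and torsion-free, is infinite cyclic; then $M$ is the parabolic cylinder, which manifestly has a cusp, and no Margulis-type horoball argument is needed. For comparison, the paper takes a different route altogether: it first reduces to orientable $M$ by extracting a geodesic core of a M\"{o}bius band (Lemma \ref{lem:mobius}), then for noncompact $M$ of finite topology it quotes Borthwick's classification of ends (cusp or funnel, and funnels are bounded by simple closed geodesics), and for $M$ with at least four ends it runs the Hass--Scott disk flow on a separating simple closed curve chosen so that the flow is trapped in a compact region, where Lemma \ref{lem:isotope} applies. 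That confinement argument is precisely the machinery that replaces the simplicity and existence claims your proposal leaves unproved.
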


In the special case where the surface is orientable and has finite Euler characteristic, the above proposition follows from a result in the recent book of Borthwick \cite[Prop. 2.16]{borthwick}. With the aid of this result, and some curve shortening, we give the more general proof which we seek.
First we need to establish the following lemma.  If $M$ were orientable, this lemma  would  follow from basic results on curve shortening flow by curvature \cite{grayson, chou&zhu}; however, when $M$ is not orientable, the geodesic curvature is not well-defined along the entire curve; hence we use the ``disk flow" method devised in \cite{hass&scott} which does not require orientability. 

\begin{lem}\label{lem:isotope}
Let $M$ be a complete Riemannian surface  and $\Gamma$ be a simple closed   curve of length $L$ in $M$. Suppose that $\Gamma$ is not isotopic to a point and the set of all curves in the isotopy class of $\Gamma$ whose lengths are bounded above by $L$ are confined to a compact region of $M$. Then $\Gamma$ is isotopic to a simple closed geodesic.
\end{lem}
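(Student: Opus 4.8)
The plan is to minimize length within the isotopy class of $\Gamma$ and to show that the infimum is attained by a simple closed geodesic. Set
\[
\ell:=\inf\big\{\Length(c)\mid c\text{ is a simple closed curve isotopic to }\Gamma\big\}.
\]
Since $\Gamma$ is itself such a curve, $\ell\le L$. First I would check that $\ell>0$. By hypothesis every simple closed curve isotopic to $\Gamma$ of length at most $L$ lies in a fixed compact region $\Omega\subset M$. Let $\rho>0$ be smaller than the injectivity radius of $M$ at each point of $\Omega$; then any closed curve in $\Omega$ of length less than $2\rho$ has diameter less than $\rho$, hence lies in a geodesic ball of radius $\rho$ and is contractible. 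Since $\Gamma$ is not isotopic to a point, no curve in its isotopy class is contractible, so $\ell\ge 2\rho>0$.

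Next I would produce a length minimizing sequence that stays embedded and confined to $\Omega$. Because $M$ need not be orientable, the geodesic curvature is not defined along the whole curve, so in place of the curvature flow of \cite{grayson, chou&zhu} I would apply the \emph{disk flow} of Hass and Scott \cite{hass&scott}: cover $\Omega$ by finitely many geodesically convex disks and successively replace the portion of the curve inside each disk by a shortest arc with the same endpoints, homotopic to it rel endpoints. By the results of \cite{hass&scott}, each such replacement does not increase length, keeps the curve embedded, and changes it only by an isotopy, so the isotopy class is preserved; and by the confinement hypothesis every iterate remains inside $\Omega$. Iterating this shortening cyclically through the disks then yields a sequence $\Gamma_n$ of simple closed curves, all isotopic to $\Gamma$, with $\Length(\Gamma_n)\searrow\ell$.

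Finally I would pass to the limit. Since the $\Gamma_n$ are confined to the compact set $\Omega$ and have uniformly bounded length, a subsequence converges to a closed curve $\Gamma_\infty$ of length $\ell$, and by \cite{hass&scott} the disk flow converges to a shortest curve in the isotopy class, which is a smooth simple closed geodesic; thus $\Gamma_\infty$ is a simple closed geodesic isotopic to $\Gamma$. I expect the main obstacle to lie in this last step: one must verify that the process converges to a genuinely \emph{embedded}, nondegenerate geodesic rather than developing self-tangencies, collapsing to a point, or escaping to an end of $M$. The preservation of embeddedness under the flow and in the limit is exactly the delicate point that the disk flow of \cite{hass&scott} is designed to control; the lower bound $\ell\ge 2\rho>0$ rules out collapse to a point; and the confinement hypothesis supplies the compactness that forces convergence rather than escape to an end.
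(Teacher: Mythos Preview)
Your proposal is correct and follows essentially the same route as the paper: both invoke the disk flow of Hass--Scott \cite{hass&scott} and observe that the confinement hypothesis supplies the compactness needed (via Ascoli) for the length-nonincreasing flow to converge to a simple closed geodesic in the isotopy class. The paper's proof is more terse---it simply cites \cite[Thm.~1.8]{hass&scott} and remarks that compactness of $M$ there is used only for Ascoli---whereas you spell out the lower bound $\ell\ge 2\rho>0$ and the mechanics of the flow, but the underlying argument is the same.
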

\begin{proof}
If $M$ is compact, the lemma follows immediately from  \cite[Thm. 1.8]{hass&scott}. But the proof of \cite[Thm. 1.8]{hass&scott} shows that compactness is needed only so that one can apply  Ascoli's theorem to conclude that any length non-increasing homotopy $\Gamma_t$  has a convergent subsequence. Of course one may draw the same conclusion as long as $\Gamma_t$ is confined to a compact subset of $M$, which we assume is the case.
\end{proof}

Next we use the previous lemma to establish another  basic fact:

\begin{lem}\label{lem:mobius}
Every complete  non-orientable Riemannian surface contains a simple closed geodesic, which has a tubular neighborhood homeomorphic to a M\"{o}bius strip.
\end{lem}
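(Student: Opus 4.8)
The plan is to construct a one-sided simple closed curve in $M$ and then straighten it to a geodesic by applying Lemma~\ref{lem:isotope}, using the fact that one-sidedness is an isotopy invariant.

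First I would produce the curve. Because $M$ is non-orientable, it contains an embedded M\"obius band $B$ --- this is one of the standard characterizations of non-orientability for surfaces --- and the core circle of $B$ is an embedded simple closed curve $\Gamma$ whose tubular neighborhood is, by construction, a M\"obius strip; equivalently, $\Gamma$ is \emph{one-sided}. Any such $\Gamma$ is automatically \emph{essential}: a null-homotopic simple closed curve bounds a disk and is therefore two-sided, so $\Gamma$ cannot be isotopic to a point. This already secures the first hypothesis of Lemma~\ref{lem:isotope}.

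The second hypothesis --- that every curve isotopic to $\Gamma$ of length at most $L:=\Length(\Gamma)$ stays inside a fixed compact set --- is the crux, since $M$ may be noncompact and a length-decreasing deformation could a priori push the curve off to infinity. I would secure it by choosing $\Gamma$ to be \emph{non-peripheral}: concretely, so that there is a compact set $K_0$ for which the $\Z/2$-homology class $[\Gamma]$ is not represented by any curve lying in $M\setminus K_0$. Such a one-sided class exists because $M$ is non-orientable, so the first Stiefel--Whitney class $w_1\neq 0$ detects a one-sided simple closed curve, while $H_1(M;\Z/2)$ is the direct limit of the homologies of a compact exhaustion, whence some one-sided class is already carried by a compact piece rather than by the ends. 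With this choice, confinement follows: if curves $\gamma_n$ isotopic to $\Gamma$, each of length $\le L$, escaped every compact set, then, being connected of uniformly bounded length, they would eventually lie entirely in $M\setminus K_0$; but $\gamma_n$ isotopic to $\Gamma$ forces $[\gamma_n]=[\Gamma]$ in $H_1(M;\Z/2)$, contradicting the choice of $K_0$. Hence the relevant curves are confined to a compact region, and Lemma~\ref{lem:isotope} applies to yield a simple closed geodesic $\gamma$ isotopic to $\Gamma$. Since a tubular neighborhood of a simple closed curve is a M\"obius strip precisely when the curve is one-sided, and one-sidedness is invariant under isotopy, $\gamma$ is one-sided and its tubular neighborhood is a M\"obius strip, as required.

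The step I expect to be the genuine obstacle is this confinement argument, and specifically the choice of a one-sided curve that is honestly non-peripheral: a carelessly chosen one-sided curve can be isotopic arbitrarily far into an end when $M$ carries infinitely many crosscaps escaping to infinity, so one must pin down the homological (or compact-core) argument guaranteeing a non-peripheral choice and make the escaping-sequence argument precise. Everything else --- existence of the M\"obius band, essentiality, and the final transfer of one-sidedness across the isotopy produced by Lemma~\ref{lem:isotope} --- should be routine.
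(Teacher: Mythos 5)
Your overall route is the same as the paper's: take the core $\Gamma$ of an embedded M\"obius band, show it is essential, show that curves isotopic to $\Gamma$ of length at most $L$ stay in a compact set, and invoke Lemma~\ref{lem:isotope}; one-sidedness then persists under the isotopy. The gap is exactly where you predicted it would be: your justification for the existence of a compact $K_0$ with $[\Gamma]\notin\mathrm{im}\big(H_1(M\setminus K_0;\Z/2)\to H_1(M;\Z/2)\big)$ does not go through. The statement that $H_1(M;\Z/2)$ is the direct limit of the homology of a compact exhaustion only tells you that every class is \emph{carried by} some compact piece --- which is automatic, since cycles have compact support --- and says nothing about whether that same class is \emph{also} representable arbitrarily far out in the ends, which is what you must rule out. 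As written, ``some one-sided class is already carried by a compact piece rather than by the ends'' is an assertion, not an argument, and it is the entire content of the confinement step.

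The fix is both simpler than what you attempted and shows that your worry about a ``carelessly chosen'' one-sided curve is unfounded: a one-sided simple closed curve can \emph{never} be pushed off itself, by any homotopy. This is the paper's argument. Inside the M\"obius band there is a curve $\Gamma'$ isotopic to $\Gamma$ meeting $\Gamma$ transversally in exactly one point, so the mod~$2$ intersection number satisfies $\#(\Gamma',\Gamma)=1$; since this number is a homotopy invariant, any curve (indeed any $\Z/2$-cycle) homotopic (homologous) to $\Gamma$ and disjoint from $\Gamma$ would give $0=\#(\Gamma'',\Gamma)=\#(\Gamma',\Gamma)=1$, a contradiction. Hence you may take $K_0$ to be $\Gamma$ itself (or a closed $\epsilon$-neighborhood of it), with no special homological choice of $\Gamma$ required; every curve isotopic to $\Gamma$ of length at most $L$ must meet $\Gamma$ and therefore lies in the closed $L/2$-neighborhood of $\Gamma$, which is compact by completeness. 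With that replacement your argument is complete and coincides with the paper's proof.
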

\begin{proof}
If a complete Riemannian surface $M$ is nonorientable then it must contain a M\"{o}bius band $U$. Let $\Gamma$ be a smooth simple closed curve which is a retraction of $U$. By the isotopy extension lemma \cite{hirsch:book}, if $\Gamma'$ is any curve in $M$ which is isotopic to $\Gamma$, then $\Gamma'$ will also have a neighborhood $U'$ which is homeomorphic to a M\"{o}bius band and retracts onto $\Gamma'$. This yields the following two observations. First, $\Gamma$ is not isotopic to a point, because  every surface is locally orientable. Second,  no curve homotopic to $\Gamma$ may be disjoint from $\Gamma$. This is due to the fact that there exists a smooth curve $\Gamma'$ homotopic to $\Gamma$ which intersects $\Gamma$ only once, see Figure \ref{fig:mobius}. 
\begin{figure}[h] 
   \centering
   \includegraphics[width=2in]{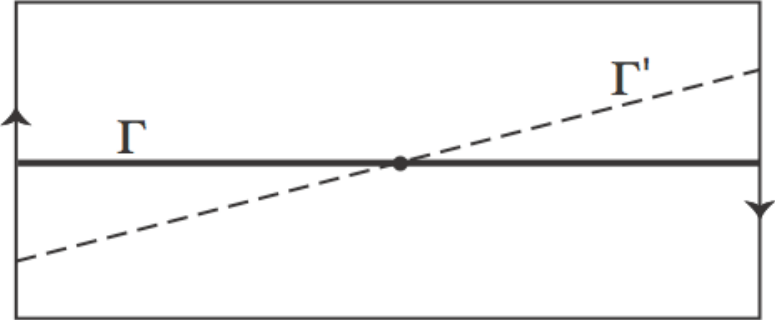} 
   \caption{}\label{fig:mobius}
\end{figure}
Next note that any curve $\Gamma''$ homotopic to $\Gamma$ will be homotopic to $\Gamma'$ as well. Suppose now, towards a contradiction, that $\Gamma''$ is disjoint from $\Gamma$. Then, if  $\#$ denotes the intersection number,  we have
$$0=\#(\Gamma'',\Gamma)= \#(\Gamma',\Gamma)=1,$$
by the invariance of intersection number under homotopy \cite{hirsch:book}.
Thus, $\Gamma''\cap\Gamma\neq\emptyset$. This shows that any \emph{length nonincreasing} isotopy of $\Gamma$ must be confined within a compact region of $M$ (if length of $\Gamma$ is $L$, then this region would consist of  points of $M$ which are within a distance $L/2$ of $\Gamma$).
It follows then, via Lemma \ref{lem:isotope}, that $\Gamma$ is isotopic to a simple closed geodesic.
\end{proof}

We need only one other basic result before proving Proposition \ref{prop:simpleclosed}. A \emph{hyperbolic cylinder} is the quotient $\H^2/G$ where $G$ is generated by the hyperbolic translations $(x,y)\mapsto (e^Lx, e^L y)$. Thus a hyperbolic cylinder contains a simple closed geodesic, i.e., the image of the positive half of the $y$-axis, which cuts the cylinder in half. Following Borthwick \cite{borthwick} we define a \emph{funnel}, as one  of these halves. We say that a surface is \emph{topologically finite} if it is homeomorphic to a compact surface minus a finite set of points.

\begin{lem}[\cite{borthwick}]\label{lem:borthwick}
Every non-simply connected noncompact  complete orientable hyperbolic surface with finite topology has a cusp or funnel end.
\end{lem}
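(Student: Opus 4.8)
The plan is to realize $M$ via the Hopf--Killing theorem as $M=\H^2/G$, where $G<\mathrm{PSL}(2,\R)$ is a discrete, torsion-free (since the action is free) group of orientation-preserving isometries, i.e.\ a Fuchsian group. Because $M$ has finite topology, $G\cong\pi_1(M)$ is finitely generated, and it is nontrivial since $M$ is not simply connected. The substantive input I would import is the classical fact that a finitely generated Fuchsian group is \emph{geometrically finite}: it admits a finite-sided convex fundamental polygon in $\H^2$. This yields the standard decomposition of $M$ into a convex core $N=\mathrm{Conv}(\Lambda(G))/G$, where $\Lambda(G)\subset\partial\H^2$ is the limit set, together with finitely many ends; the whole argument is then a dichotomy on the area of $M$.

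First I would observe that, for a geometrically finite group, $\Lambda(G)$ is either all of $\partial\H^2$ (first kind, forcing $N=M$ and, by Gauss--Bonnet applied to the finite-type surface, finite area $=-2\pi\chi$) or a proper closed subset (second kind, forcing infinite area). In the finite-area case there are no funnels, so I would produce a cusp directly: by the Margulis thick--thin decomposition the thin part is a union of cusps and compact collars about short closed geodesics, while finite area makes the thick part compact; hence the noncompactness of $M$ must come from a cusp. In the infinite-area case, the convex hull of the proper limit set $\Lambda(G)$ is a proper convex subset of $\H^2$ bounded by complete geodesics, which descend to the geodesic boundary of $N$, so $M\setminus N\neq\emptyset$. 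Each component of $M\setminus N$ is isometric to a funnel, i.e.\ one half of a hyperbolic cylinder $\H^2/\langle g\rangle$ with $g$ the hyperbolic translation $(x,y)\mapsto(e^Lx,e^Ly)$, exactly matching the definition fixed above. Either way $M$ has a cusp or funnel end, as claimed.

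The main obstacle is the geometric finiteness step: passing from ``finitely generated'' to a finite-sided fundamental polygon (equivalently, to the compact-modulo-cusps structure of the convex core and the first/second-kind trichotomy) is the one genuinely nontrivial classical theorem here, and it is precisely the structural content one is implicitly importing from Borthwick. A secondary point requiring care is the finite-area case, where I must rule out ends other than cusps; I would handle this through the thick--thin decomposition, noting that finite area forbids the thick, flaring geometry of a funnel so that only cusp ends remain. Once these two facts are in hand, the split on $\mathrm{area}(M)$ is immediate and the remainder is bookkeeping matching the cusp and funnel normal forms to the definitions already set down.
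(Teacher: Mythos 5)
Your route shares its essential input with the paper's proof --- both hinge on the fact that a finitely generated Fuchsian group is geometrically finite (Borthwick's Thm.\ 2.10) --- but where the paper then simply invokes Borthwick's classification of the ends of \emph{nonelementary} geometrically finite surfaces (Thm.\ 2.13), you re-derive that classification via the convex core and the thick--thin decomposition. That is a legitimate and more self-contained path, and your finite-area branch is fine: finite area forces the thick part to be compact, tubes about short geodesics are compact, so noncompactness must be carried by a cusp component of the thin part.

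There is, however, a genuine gap in your infinite-area branch: the convex-core decomposition you use is only valid for nonelementary groups, and the elementary parabolic case slips through. If $G$ is cyclic parabolic, then $M$ is the parabolic cylinder, which is noncompact, non-simply-connected, orientable, of finite topology and of \emph{infinite} area with $\Lambda(G)$ a single point; your argument sends it to the second-kind branch, where $\mathrm{Conv}(\Lambda(G))\cap\H^2=\emptyset$, so $N=\emptyset$, $M\setminus N=M$, and the assertion that every component of $M\setminus N$ is a funnel is simply false (one end of the parabolic cylinder is a cusp, the other is neither a cusp nor a funnel in the sense fixed here, being a region of the form $\{y<h\}/G$). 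The conclusion of the lemma still holds for this surface, but not by the argument as written. This is precisely why the paper first disposes of the case where $G$ is cyclic by hand --- a single generator of a torsion-free orientation-preserving discrete group is conjugate to a parabolic or hyperbolic translation, giving the parabolic cylinder (cusp end) or hyperbolic cylinder (funnel ends) directly --- and only then applies the classification to the remaining, nonelementary case. Inserting that one preliminary dichotomy (elementary versus nonelementary $G$) repairs your proof; without it, the area dichotomy misclassifies the parabolic cylinder.
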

\begin{proof}
If our hyperbolic surface $M=\H^2/G$ is homeomorphic to an annulus, or, equivalently, $M$ is orientable and $G$ has only one generator, then that generator is conjugate to either a parabolic or hyperbolic translation of $\H^2$, which we defined above. Consequently $M$ is isometric to either the parabolic cylinder or hyperbolic cylinder, in which case $M$ has a cusp or funnel end respectively, and we are done. On the other hand, if $M$ is not homeomorphic to an annulus (and is not simply connected), then $M$ will be ``nonelementary" and the proof follows from \cite[Thm. 2.13]{borthwick} which classifies the ends of nonelementary geometrically finite orientable hyperbolic surfaces. To apply this classification result, we just need to note that since $M$ is topologically finite, it is ``geometrically finite" as well \cite[Thm. 2.10]{borthwick}, i.e.,  $G$ is finitely generated, or, equivalently, the fundamental region of $M$ is a finite-sided convex polygon in $\H^2$.
\end{proof}

Now we are  ready to prove Proposition \ref{prop:simpleclosed}:

\begin{proof}[Proof of Proposition \ref{prop:simpleclosed}]
By Lemma \ref{lem:mobius} we may suppose that our surface, say $M$, is orientable.
It is well-known that every compact non-simply connected orientable surface contains a simple closed geodesic, e.g., this follows from Lemma \ref{lem:isotope}; or see \cite[Sec. 9.6]{ratcliffe} for the hyperbolic case. So we may also suppose that  $M$ is noncompact. 
Then, if $M$ has finite topology, by Lemma \ref{lem:borthwick}, one of the ends of $M$ must be a ``cusp" or a ``funnel" \cite{borthwick}. In either case we would be done since, by definition, funnels  are bounded by  simple closed geodesics. So it remains to consider the case where $M$ has infinite topology, although all we need is that $M$ have at least four ends. Then we show that $M$ must have a simple closed geodesic as follows.

By the generalized Jordan curve theorem, any simple closed curve $\Gamma_0$ divides $M$ into two components. Choose $\Gamma_0$ so that each of the components of $M-\Gamma_0$ contains at least two ends of $M$, and $\Gamma_0$ is rectifiable. Then let $\Gamma_t$ be a length  nonincreasing isotopy of  $\Gamma_0$, e.g., as defined by Hass and Scott \cite[Thm. 1.8]{hass&scott}. Note that, by the isotopy extension lemma \cite{hirsch:book}, the topology of $M-\Gamma_t$ does not depend on $t$. Let $A_t$ be a continuous choice of a component of $M-\Gamma_t$, and $E\subset A_0$ be an end  representative of $M$  which does not contain all ends of $M$ that are in  $A_0$. Then $\Gamma_t$ cannot be contained entirely in $E$ for any time $t$,  because then $A_t$ would be disjoint from some of the ends of $A_0$.  Now let $E'\subset E$ be another end representative such that the distance of $\partial E'$ from $\partial E$ is bigger than $L/2$ were $L$ is the length of $\Gamma_0$. Then $\Gamma_t\cap E'=\emptyset$ for all $t$.

Similarly, for each end of $M$  we may choose an end representative  which will always be disjoint from  $\Gamma_t$. Then the complement of all these end representatives is a  compact subset of $M$ which contains $\Gamma_t$ for all $t$, and so we may apply Lemma \ref{lem:isotope} to complete the proof.
\end{proof}

Having proved Proposition \ref{prop:simpleclosed}, we may now proceed with the rest of the proof of Theorem \ref{thm:1} by considering the following two cases:

\subsubsection{} If our hyperbolic surface $M=\H^2/G$ contains a cusp, then it contains a \emph{horocycle}, i.e., a simple  closed curve $\gamma\colon \R/L\to M$ which  lifts to a horizontal line $\ol\gamma(t)=(t,h)\subset\H^2$. Then let $\tilde\gamma$ be the perturbation of $\ol\gamma$ given by 
$$\tilde\gamma(t):=\left(t,\,h+\lambda \sin\left(\frac{2\pi t}{L}\right)\right),$$
 see Figure \ref{fig:horocycle}. 
\begin{figure}[h] 
   \centering
   \includegraphics[width=3.25in]{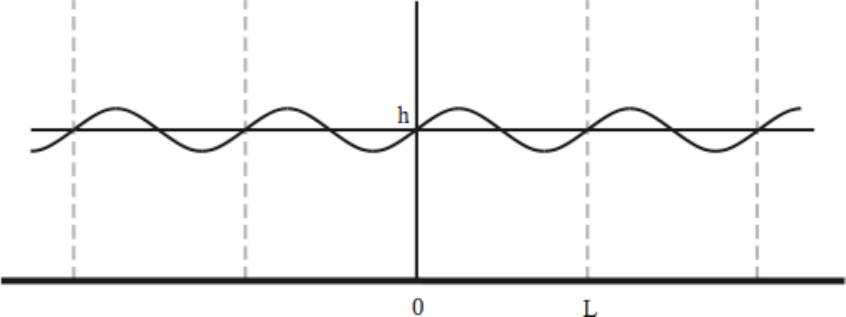} 
   \caption{}
   \label{fig:horocycle}
\end{figure}
Recall that there is  a unique element $g\in G$, given by $g(\ol\gamma(0))=\ol\gamma(L)$, which in this case is the parabolic translation $(x,y)\mapsto (x+L, y)$.
Further, as we argued in Section \ref{subsec:parabolic}, since $\tilde\gamma$ is also invariant with respect to $g$, the  projection $\pi\circ\tilde\gamma$  is a smooth closed curve in $M$ with only two vertices for any $\lambda\neq 0$, by Lemma \ref{lem:maeda}. Furthermore, again as $\lambda\to 0$, $\tilde\gamma$ converges to $\ol\gamma$ with respect to the $\C^1$ norm. Thus, since $\gamma$ is simple,  $\pi\circ\tilde\gamma$ will be simple as well.

\subsubsection{} If $M=\H^2/G$ contains a simple closed geodesic $\gamma\colon\R/L\to M$, then it lifts to a geodesic $\ol\gamma\colon\R \to \H^2$. We may assume that $\ol\gamma$ traces the upper half of the $y$-axis in its positive direction, and $\ol\gamma(0)=(0,1)$. Then $\ol\gamma(L)=(0,e^L)$ (recall that the hyperbolic distance of $(0,1)$ from $(0,y)$ is given by $\ln(y)$).
As before, let $g\in G$ be the (unique) element such that $g(\gamma(0))=\gamma(L)$. Then $g$ is either the hyperbolic translation $(x,y)\mapsto (e^L x, e^L y)$ or the glide reflection $(x,y)\mapsto (-e^L x, e^L y)$, depending on wether or not small tubular neighborhoods of $\gamma$ are orientable. So there are two cases to consider:

If $g$ is the hyperbolic translation (i.e., a tubular neighborhood of $\gamma$ in $M$ is orientable), let 
$$
\tilde\gamma(t):=\left(\lambda t \sin\left(\frac{2\pi}{L}\ln(t)\right),t\right),
$$
see Figure \ref{fig:esine}.
\begin{figure}[h] 
   \centering
   \includegraphics[width=2.75in]{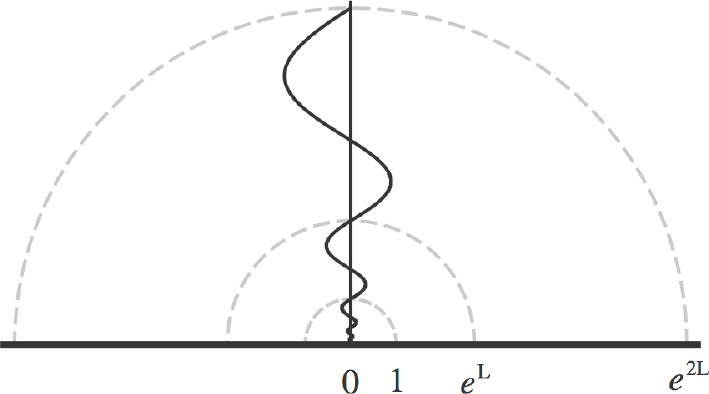} 
   \caption{}
   \label{fig:esine}
\end{figure}
Note that $\tilde\gamma(e^Lt)=e^ L \tilde\gamma(t)$, i.e., the image of $\tilde\gamma$ is invariant under the action of $g$.  Thus $\pi\circ\tilde\gamma$ is a smooth closed curve in $M$, and choosing $\lambda$ sufficiently small we can make sure that  $\pi\circ\tilde\gamma$ is simple as we argued in Section \ref{subsec:parabolic}. Finally note that $\tilde\gamma$ has only two vertices on the interval $[1,e^L]$. Thus,  the projection of $\pi\circ\tilde\gamma$ will have only two vertices, again by Lemma \ref{lem:maeda}.

If $g$ is the glide reflection (i.e., no neighborhood of $\gamma$ in $M$ is orientable)  let 
$$
\tilde\gamma(t):=\left(\lambda t \sin\left(\frac{\pi}{L}\ln(t)\right),t\right).
$$
Then the image of $\tilde\gamma$ is invariant under the action of $g$ and $\pi\circ\tilde\gamma$ again yields the desired curve for small $\lambda$, which completes the proof of Theorem \ref{thm:1}. 

We  record below the last part of the proof for future reference:

\begin{lem}\label{lem:approx}
Any closed geodesic or horocycle  in a hyperbolic surface is $\C^\infty$-close to a closed curve  with no more than two vertices. \qed
\end{lem}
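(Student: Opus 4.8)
The plan is to construct the approximating curves explicitly in the upper half-plane model $\H^2$, reproducing in a uniform way the two perturbation schemes carried out in this section, and to organize everything around two ingredients: invariance under a deck transformation, which forces the perturbed curve to descend to a \emph{closed} curve in $M=\H^2/G$, and Lemma \ref{lem:maeda}, which reduces the counting of vertices to a computation of \emph{Euclidean} curvature rather than hyperbolic geodesic curvature. First I would lift $\gamma$ to $\ol\gamma$ and take the unique $g\in G$ with $g(\ol\gamma(0))=\ol\gamma(L)$. A normalization puts $\ol\gamma$ on a horizontal line $(t,h)$ when $\gamma$ is a horocycle (so $g$ is the parabolic translation $(x,y)\mapsto(x+L,y)$), and on the positive $y$-axis when $\gamma$ is a closed geodesic (so $g$ is the hyperbolic translation $(x,y)\mapsto(e^Lx,e^Ly)$ or the glide reflection $(x,y)\mapsto(-e^Lx,e^Ly)$, according to whether a tubular neighborhood of $\gamma$ is orientable). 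Simplicity is not required by the statement, so I only need closedness and smallness.

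For each case I would introduce the one-parameter family $\tilde\gamma=\tilde\gamma_\lambda$ displayed above: the vertical perturbation $\bigl(t,\,h+\lambda\sin(2\pi t/L)\bigr)$ of the horocycle, and the logarithmic perturbation $\bigl(\lambda t\sin(\tfrac{2\pi}{L}\ln t),\,t\bigr)$ (respectively with $\tfrac{\pi}{L}$ in place of $\tfrac{2\pi}{L}$ in the glide-reflection subcase) of the geodesic. The substitution $t\mapsto e^Lt$ together with $\sin(\theta+2\pi)=\sin\theta$ (respectively $\sin(\theta+\pi)=-\sin\theta$) shows directly that the image of $\tilde\gamma$ is $g$-invariant, so $\pi\circ\tilde\gamma$ is a smooth closed curve in $M$. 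Since every derivative of the perturbation term carries a factor of $\lambda$, the family $\tilde\gamma_\lambda$ converges to $\ol\gamma$ in the $\C^\infty$-norm as $\lambda\to0$, whence $\pi\circ\tilde\gamma_\lambda\to\gamma$ in $\C^\infty$; this is precisely what ``$\C^\infty$-close'' demands.

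It then remains to bound the number of vertices, and this is where the real work lies. By Lemma \ref{lem:maeda} the inclusion $i\colon\H^2\to\R^2$ preserves the sign of the derivative of geodesic curvature, so over one fundamental interval the vertices of $\pi\circ\tilde\gamma$ correspond exactly to the critical points of the Euclidean curvature of $\tilde\gamma$, and I only need to count the latter. For the horocycle the image is the graph $y=h+\lambda\sin(2\pi x/L)$, whose Euclidean curvature is, to leading order in $\lambda$, a nonzero multiple of $\sin(2\pi x/L)$; its derivative is then, to leading order, a nonzero multiple of $\cos(2\pi x/L)$, which has exactly two transverse zeros per period, so a standard perturbation argument confirms exactly two critical points and no spurious ones for small $\lambda$. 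The genuine obstacle is the geodesic case, since $\bigl(\lambda t\sin(\tfrac{2\pi}{L}\ln t),t\bigr)$ is not a graph and its Euclidean curvature is more involved. I expect to treat it by the substitution $s=\ln t$, which converts the logarithmic oscillation into an honest $L$-periodic one and exhibits the symmetry of the curve; the derivative of curvature then becomes, to leading order in $\lambda$, a periodic function with two zeros on the fundamental interval $[1,e^L]$ (and correspondingly at most two over the half-period $s\in[0,\pi]$ relevant to the glide reflection), giving the bound of two vertices in every case.
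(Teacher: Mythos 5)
Your proposal is correct and follows essentially the same route as the paper: Lemma \ref{lem:approx} is proved there simply by recording the explicit perturbations of \S\ref{subsec:parabolic}--\ref{sec:hyperbolic} --- the sine graph over the horocycle and the curves $\left(\lambda t\sin\left(\tfrac{2\pi}{L}\ln t\right),t\right)$, $\left(\lambda t\sin\left(\tfrac{\pi}{L}\ln t\right),t\right)$ over the geodesic --- together with their invariance under the deck transformation $g$ and the reduction of vertex counting to Euclidean curvature via Lemma \ref{lem:maeda}. One small quibble: in the geodesic case the leading-order derivative of the Euclidean curvature in $s=\ln t$ is $e^{-s}$ times a nonzero multiple of $\cos\left(\tfrac{2\pi}{L}s\right)$ (resp.\ $\cos\left(\tfrac{\pi}{L}s\right)$), not a periodic function, but the sine terms cancel and the transverse zero count on the fundamental interval is exactly as you claim.
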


\begin{note}\label{note:mobius}
The theorem of M\"{o}bius mentioned above is equivalent to the statement that every simple closed curve $\Gamma$  in $\S^2$ which is symmetric with respect to the antipodal reflection ($\Gamma=-\Gamma$) has at least six inflection points. Since the antipodal reflection switches the sign of geodesic curvature, the number of inflection points of $\Gamma$ must be $2m$ where $m$ is odd. So, it is enough to show that $\Gamma$ has more than two inflections. In this sense, the theorem of M\"{o}bius may be viewed as a special case of the ``tennis ball theorem" \cite{arnold:flattening, arnold:plane,angenent} which proves the existence of at least four inflections for curves which bisect the area of $\S^2$. The latter result in turn follows from a theorem of Segre \cite{segre:inflection} who proved that any simple closed curve on $\S^2$ which contains the origin in its convex hull must have at least four inflection points. Another proof of Segre's theorem may be found in \cite{weiner:inflection}. For other refinements or results related to the theorem of M\"{o}bius see, \cite{Tsasaki, thorbergsson&umeharaII,pohl}.
\end{note}

\section{Proof  of Theorem \ref{thm:2}}\label{sec:thm2}
Let $M$ be a complete Riemannian surface satisfying the hypothesis of Theorem \ref{thm:2}. By Lemma \ref{lem:jackson}, and after a rescaling, we may again assume that $M$ has constant curvature $K=1$, $0$, or $-1$ which result in  the following three cases respectively:
\subsection{The elliptic case}
Recall that here $M$ is  the real projective plane $\RP^2=\S^2/\{\pm 1\}$. In this case we may construct an explicit example of a curve with only two vertices, which bounds an immersed compact surface, as shown in Figure \ref{fig:loop}.
\begin{figure}[h] 
   \centering
   \includegraphics[width=1.5in]{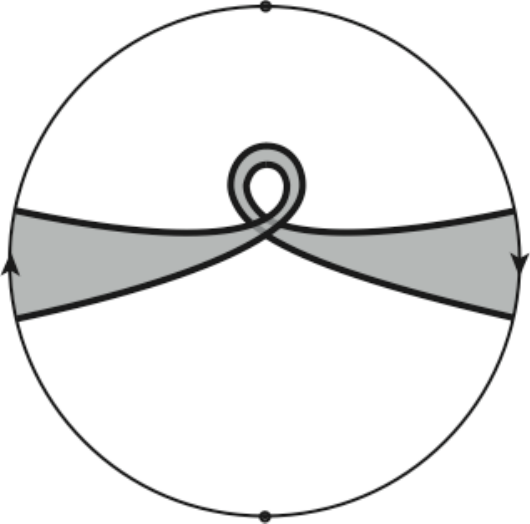} 
   \caption{}\label{fig:loop}
\end{figure}
In this picture,  $\RP^2$ is represented as a hemisphere of $\S^2$, say the northern hemisphere, with the antipodal points on its boundary, or the equator, identified, and we are looking at this hemisphere from ``above" (i.e., in a direction orthogonal to the plane of the equator). Alternatively, the above picture may be regarded as that of a unit disk, and then we may transfer the depicted region to a hemisphere via a stereographic projection, which preserves the number of vertices by Lemma \ref{lem:maeda}.
Note that the points where the curve intersects the boundary of the hemisphere are inflection points.

\subsection{The parabolic case}
In this case $M=\R^2/G$ where $G$ has at most two generators one of which must be a translation or a glide reflection \cite{stillwell}. But the composition of two glide reflections is a translation. Thus $G$ must contain a subgroup $H$ generated by a translation. Then the projection $\R^2/H\to \R^2/G$ yields a covering of $M$ by a cylinder. Thus it is enough to show that every flat cylinder contains a closed curve with only two vertices which bounds a compact immersed surface. Then the covering map yields the corresponding examples in all other topological types of flat surfaces (the twisted cylinder, the torus, and the Klein bottle). 

\begin{figure}[h] 
   \centering
   \includegraphics[width=2.75in]{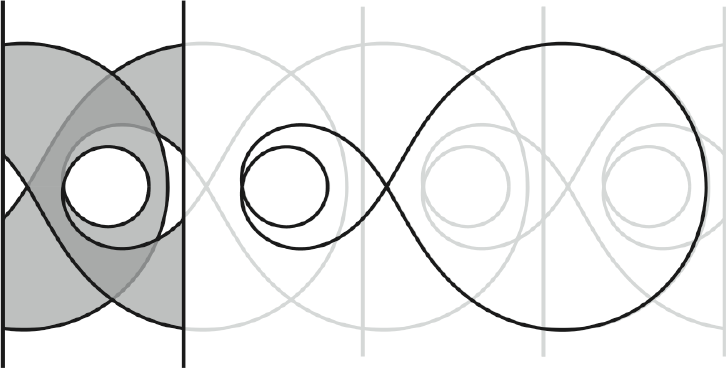} 
   \caption{}
   \label{fig:cylinder}
\end{figure}

So we may suppose that $M$ is a flat cylinder, i.e., $M=\R^2/L\Z$ which is the quotient of $\R^2$ modulo the horizontal translations $(x,y)\mapsto (x+zL,y)$. Alternatively, we may think of $M$ as the fundamental region $[0,L]\times\R\subset\R^2$ with its right and left hand sides identified. Then the curve we seek is depicted in the left hand side of Figure \ref{fig:cylinder}, and the picture on the right shows a lifting of that curve in $\R^2$.
Note that the curve on the cylinder bounds a compact immersed surface. Thus we only need to check that the given curve has only two vertices. Indeed the curve on the right is given by
$$
\gamma(t):=\frac{1}{a^2+2a\cos\left(\frac{t}{5}\right)\cos(t)+\cos\left(\frac{t}{5}\right)^2}\left(a+\cos\left(\frac{t}{5}\right)\cos(t),\;\cos\left(\frac{t}{5}\right)\sin(t)\right),
$$
where $a=9/100$. To count  the vertices of $\gamma$ note that if we invert $\gamma$ with respect to the unit circle and then translate it to the left by a distance of $a$ we obtain the curve given by $r(\theta)=\cos(\theta/5)$ in polar coordinates, see Figure \ref{fig:gencardioid}.
\begin{figure}[h] 
   \centering
   \includegraphics[width=1.1in]{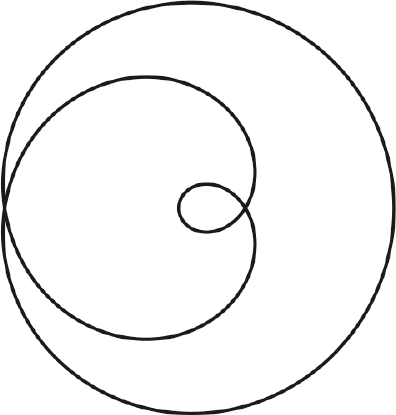} 
   \caption{}
   \label{fig:gencardioid}
\end{figure}
A straight forward  computation then shows that the derivative of the curvature of $r$ 
is given by
$$
\kappa'(\theta):=\frac{24\left(8+6\cos\left(\frac{2\theta}{5}\right)\right)\sin\left(\frac{2\theta}{5}\right)}{\left(13+12\cos\left(\frac{2\theta}{5}\right)\right)^\frac{5}{2}}.
$$
Thus $r$ 
has only two vertices which occur at $\theta=0$ and $\theta=5\pi/2$, and one may easily check that both are nondegenerate critical points of curvature. Hence $\gamma$ has only two vertices as well, since M\"{o}bius transformations preserve vertices \cite{pinkall,heil}, as we had pointed out earlier. Finally, if $\gamma$ has only two vertices as a curve in $\R^2$ then it has only  two vertices in $\H^2$ as well, by Lemma \ref{lem:maeda}. 

\begin{note}
Another example of a closed curve on the torus, or the Klein bottle, which has only two vertices but bounds a compact immersed surface is depicted in Figure \ref{fig:torus}.
\begin{figure}[h] 
   \centering
   \includegraphics[width=3.75in]{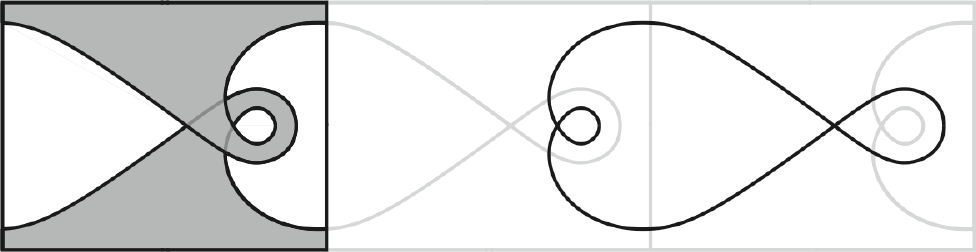} 
   \caption{}
   \label{fig:torus}
\end{figure}
Note that although this example is more simple than that of Figure \ref{fig:cylinder}, it does not work on the cylinder  or twisted cylinder because the region that it bounds would not be compact in those surfaces.
\end{note}

\begin{note}
Although the example of Figure \ref{fig:cylinder} bounds a compact immersed surface in the cylinder, the lifting of that curve does not bound any such surface in the plane. The conditions for a closed planar curve to bound a compact immersed disk were first described in the thesis of Blank \cite{blank}. See \cite{marx, francis:sphere, bailey} for further refinements and generalizations of that result.
\end{note}

\subsection{The hyperbolic case}
Recall that in this case, by Proposition \ref{prop:simpleclosed}, $M$ either has a cusp, or a simple closed geodesic. 

\subsubsection{}
Suppose first that $M$ has a cusp, then we may construct an example similar to that of Figure \ref{fig:cylinder} on $M$ because cusps are asymptotic to cylinders. More precisely, recall that a cusp is isometric to an end of a pseudosphere, which we may think of as a surface of revolution $\Sigma$ about the $z$-axis in $\R^3$.  Note that $\Sigma$ contains a meridian $m_\epsilon$ of length $\epsilon$ for every sufficiently small $\epsilon$. After a vertical translation we may assume that  $m_\epsilon$ lies on the $xy$-plane. Further, after a homothety of $\R^3$, we may assume that $m_\epsilon$ has length $2\pi$. Thus we obtain a sequence of surfaces $\Sigma_\epsilon$ which converge to  the cylinder $C$ given by $x^2+y^2=1$, within any given compact ball $B$ centered at the origin of $\R^3$, as $\epsilon\to 0$. In particular, for small $\epsilon$, $C$ and $\Sigma_\epsilon$ will be $\C^\infty$-close in $B$, and thus any curve in $B\cap C$ may be projected into $\Sigma_\epsilon$ by moving it along the normals to $C$. The new curve will be $\C^\infty$ close to the original one, and thus will have the same number of vertices as the original curve if  all the vertices of the original curve are nondegenerate critical points of curvature, which, as we verified earlier, is the case in the example of Figure \ref{fig:cylinder}.

\subsubsection{}
Now suppose that $M$ has a simple closed geodesic $\Gamma$. Then $\Gamma$ lifts to a geodesic in the upper half-plane $\H^2$, and after an isometry, we may suppose that this lifting traces the positive half of the $y$-axis. Also recall that homotheties of $\R^2$ are isometries of $\H^2$. We can use these homotheties by constructing an example similar to that of Figure \ref{fig:cylinder} along the $y$-axis, see Figure \ref{fig:cylinder2}. 
\begin{figure}[h] 
   \centering
   \includegraphics[width=2.75in]{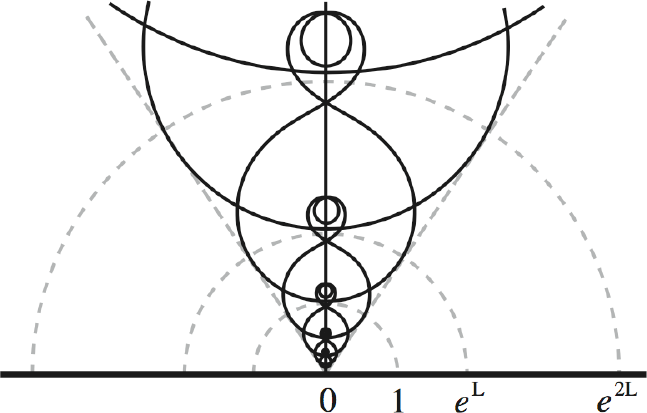} 
   \caption{}
   \label{fig:cylinder2}
\end{figure}
The curves depicted in this picture are rescalings of each other by a (Euclidean) factor of $e^L$ where $L$ is the length of $\Gamma$. This completes the proof once we note that example of Figure \ref{fig:cylinder} may be constructed for any given $L$.

\section{Proof of Theorem \ref{thm:3}}
Let $M$ be an orientable space form of genus zero, flat torus, or a rescaling of $\RP^2$, and $\Gamma\subset M$ be a simple closed curve which bounds an embedded surface. If $M=\RP^2$, or a rescaling of $\RP^2$, then $\Gamma$ has at least three inflection points by M\"{o}bius's theorem, as we pointed out in Section \ref{subsec:elliptic}. If $M$ is an orientable surface  of genus zero or a torus, then $\Gamma$ must bound a disk in $M$. Consequently $\Gamma$ lifts to a simple closed curve in the universal cover of $M$, and then it follows from Kneser's theorem and Lemma \ref{lem:maeda} that $\Gamma$ must have at least four vertices.

To complete the proof of Theorem \ref{thm:3} it remains then to show that if $M$ is not an orientable space form of genus zero, flat torus, or a rescaling of $\RP^2$, then it must contain a simple close curve $\Gamma$ which bounds a compact surface but has no more than two vertices. Again, by Lemma \ref{lem:jackson}, we may assume that $M$ has constant curvature. Further, since we already know that $M$ may not be $\S^2$ or $\RP^2$, we may suppose that the curvature of $M$ is nonpositive. The next result shows that we may assume that $M$ is orientable as well:

\begin{lem}
Any nonorientable complete surface of nonpositive curvature contains a simple closed curve with only two vertices which bounds a compact embedded surface.
\end{lem}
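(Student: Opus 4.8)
The plan is to realize the boundary of a Möbius band by a curve with only two vertices. First I would apply Lemma~\ref{lem:mobius} to produce a simple closed geodesic $\Gamma\subset M$, of length $L$, whose tubular neighborhood $U$ is homeomorphic to a Möbius strip. Then $\partial U$ is a simple closed curve bounding the compact \emph{embedded} surface $U$, so it suffices to exhibit some curve isotopic to $\partial U$ (which then likewise bounds a Möbius band) having no more than two vertices. Lifting $\Gamma$ to the universal cover $X$ (which is $\R^2$ if $K=0$ and $\H^2$ if $K=-1$), its lift $\ol\gamma$ traces a complete geodesic $\ell$, and the deck element $g$ with $g(\ol\gamma(0))=\ol\gamma(L)$ is the glide reflection along $\ell$; hence $g^2$ is the (hyperbolic) translation of length $2L$ along $\ell$.

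The key point is that the perturbations used for Theorem~\ref{thm:1} are invariant under $g$ itself and so remain one-sided and do not bound; instead I must perturb $\Gamma$ to the \emph{boundary-parallel} curve, that is, to a curve which is invariant under $g^2$ but \emph{not} under $g$, stays off $\ell$, and wraps around $\Gamma$ twice. Writing $\ell$ as the $x$-axis in the flat case (so $g(x,y)=(x+L,-y)$) I would take
$$\tilde\gamma(t):=\Big(t,\; c+\lambda\sin\big(\tfrac{\pi t}{L}\big)\Big),\qquad c>\lambda>0,$$
and writing $\ell$ as the positive $y$-axis in the hyperbolic case (so $g(x,y)=(-e^{L}x,e^{L}y)$) I would take
$$\tilde\gamma(t):=\Big(ct+\lambda\,t\sin\big(\tfrac{\pi}{L}\ln t\big),\;t\Big),\qquad c>\lambda>0.$$
A direct check shows each $\tilde\gamma$ is invariant under $g^2$ but not under $g$, and that its image avoids $\ell$ since $c>\lambda>0$; hence $\pi\circ\tilde\gamma$ runs once along each of the two sides of a tubular neighborhood of $\Gamma$ and is the desired boundary-parallel curve. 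Fixing $c$ small enough that this neighborhood is an embedded Möbius band and letting $\lambda\to0$, the curve $\pi\circ\tilde\gamma$ becomes $\C^{1}$-close to the (simple) boundary of that neighborhood, so it is simple and bounds a compact embedded Möbius band.

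It then remains to count vertices, which I expect to be the crux. By Lemma~\ref{lem:maeda} the vertex count is unchanged upon passing from $X$ to the Euclidean plane, so in each case it suffices to count the Euclidean vertices of $\tilde\gamma$ over a single $g^2$-period. I would do this perturbatively in $\lambda$ with $c$ fixed. When $\lambda=0$ the flat curve is the horizontal line $y=c$ and the hyperbolic curve is the straight ray $x=cy$, both of Euclidean curvature zero; thus to first order the curvature $\kappa$ of $\tilde\gamma$ is $O(\lambda)$, and a short computation gives that its derivative is, to leading order, proportional to $\cos(\pi t/L)$ in the flat case and to $\cos\big(\tfrac{\pi}{L}\ln t\big)$ in the hyperbolic case. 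Over one period ($t\in[0,2L]$, respectively $t\in[1,e^{2L}]$) each of these has exactly two, nondegenerate, zeros; since the leading term dominates the $O(\lambda^{2})$ corrections away from those zeros, $\kappa$ has exactly two critical points for all sufficiently small $\lambda$. Separating these genuine two vertices from spurious higher-order critical points of the curvature is the main obstacle; once it is settled, $\pi\circ\tilde\gamma$ is the required two-vertex curve bounding a compact embedded surface, which disposes of the flat and hyperbolic cases and hence establishes the lemma.
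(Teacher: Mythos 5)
Your construction is essentially the paper's own proof: the paper likewise takes the core geodesic of a M\"obius band furnished by Lemma~\ref{lem:mobius} and perturbs it to the pair of curves $(t,\lambda(\sin(\pi t/L)\pm\epsilon))$ (resp.\ $(\lambda t(\sin(\pi\ln t/L)\pm\epsilon),t)$), which is exactly your curve together with its image under $g$ after the renaming $c=\lambda\epsilon$. The projected curve bounds an embedded M\"obius band and has two vertices per period, just as you argue, with the vertex count handled at the same level of detail as in the paper (and made rigorous by the perturbation lemma in the appendix).
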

\begin{proof}
First recall that, by Lemma \ref{lem:mobius}, any nonorientable complete Riemannian surface $M$ contains a simple closed geodesic $\gamma\colon \R/L\to M$ which has a tubular neighborhood homeomorphic to a M\"{o}bius strip. Let $\ol\gamma$ be a lifting of $\gamma$ to the universal cover of $M$, which is $\R^2$ or $\H^2$. 

If the universal cover of $M$ is $\R^2$, i.e., $M$ has zero curvature, then we may suppose, for convenience, that $\ol\gamma$ traces the $x$-axis. Now let 
$$
\tilde\gamma_\pm(t):=\left(t,\lambda\left( \sin\left( \frac{\pi t}{L}\right)\pm \epsilon\right)\right)
$$
and note that these curves are invariant under the group of ``glide reflections" $(x,y)\mapsto (x+L,-y)$, see Figure \ref{fig:horocycle2}.
\begin{figure}[h] 
   \centering
   \includegraphics[width=3.25in]{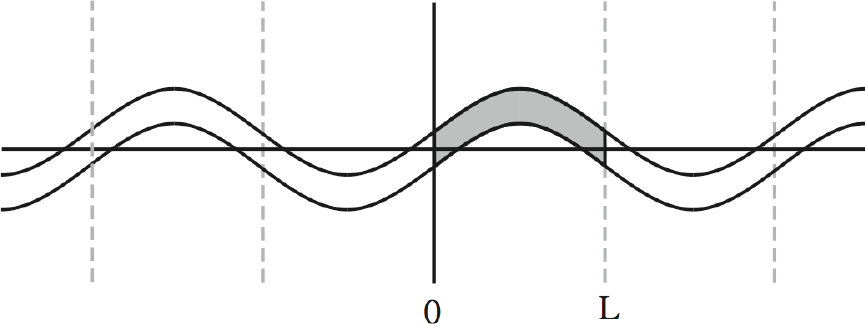} 
   \caption{}
   \label{fig:horocycle2}
\end{figure}
Thus the covering map $\pi\colon\R^2\to M$ sends these curves to a single closed curve with only two vertices in $M$ which converges to $\gamma$ with respect to the $\C^1$-norm as $\lambda\to 0$. In particular, this curve will be embedded for small $\lambda$.

If the universal cover of $M$ is $\H^2$, i.e., $M$ has negative curvature, then we may suppose that $\ol\gamma$ traces  the positive half of $y$-axis in the positive direction and $\ol\gamma(0)=(0,1)$. Now let 
$$
\tilde\gamma_\pm(t):=\left(\lambda\, t\left(\sin\left(\frac{\pi \ln(t)}{L}\right)\pm \epsilon\right),t\right)
$$ 
and again note that these curves are invariant under the group of ``glide reflections" $(x,y)\mapsto (-e^L x,e^L y)$, see Figure \ref{fig:esine2}.
\begin{figure}[h] 
   \centering
   \includegraphics[width=2.75in]{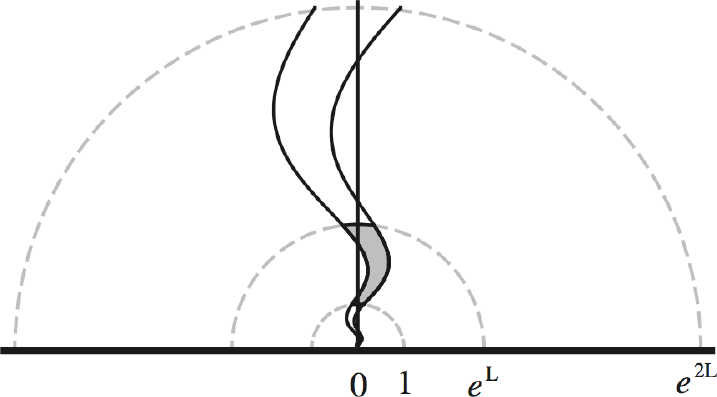} 
   \caption{}
   \label{fig:esine2}
\end{figure}
Thus the covering map $\pi\colon\H^2\to M$ once again sends these curves to the desired curve for small $\lambda$.
\end{proof}

Now recall that the only orientable parabolic $2$-dimensional space forms other than the tori are the cylinders and the Euclidean plane, both of which have genus zero. Thus we may assume that $M$ has negative curvature, and observe that 

\begin{lem}
Every complete orientable hyperbolic surface $M$ of nonzero genus contains a simple closed geodesic or a horocycle which bounds a compact subset of $M$.
\end{lem}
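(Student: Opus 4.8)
The plan is to manufacture a compact-bounding curve out of a handle and then replace it, without losing the bounding property, by a geodesic or a horocycle. Since $M$ has nonzero genus it contains two simple closed curves $\alpha,\beta$ meeting transversally in a single point; a regular neighborhood $T$ of $\alpha\cup\beta$ is an embedded compact one-holed torus, and its boundary $\Gamma_0:=\partial T$ is a separating simple closed curve whose compact side is $T$. First I would verify that $\Gamma_0$ is essential: were it null-homotopic it would bound a disk $D$, and then $D\cup_{\Gamma_0}T$ would be a closed torus; since $M$ is connected this would force $M$ to be a torus, which admits no complete hyperbolic metric. Thus $\Gamma_0$ is not isotopic to a point.

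Writing $M=\H^2/G$ as before, the free homotopy class of $\Gamma_0$ corresponds to a nontrivial conjugacy class $[g]$ in $G$. Because $G$ acts freely, properly discontinuously, and (in the orientable case we have reduced to) by orientation preserving isometries of $\H^2$, it is torsion free and discrete, so the element $g$ is either hyperbolic or parabolic. These two possibilities are precisely the source of the two alternatives in the statement, and I would treat them separately.

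Suppose $g$ is hyperbolic. I would run a length nonincreasing isotopy $\Gamma_t$ of $\Gamma_0$ and invoke Lemma \ref{lem:isotope}. By the isotopy extension lemma the compact side of $\Gamma_t$ stays an embedded one-holed torus throughout, and $\Gamma_t$ has length at most $L=\Length(\Gamma_0)$, so its diameter is at most $L/2$. If $\{\Gamma_t\}$ failed to be confined to a compact region, a curve $\Gamma_t$ would eventually penetrate deep into a single end; but a bounded-length curve can escape to infinity only up a cusp, in which case its class $[g]$ would be peripheral, contradicting that $g$ is hyperbolic. Hence $\{\Gamma_t\}$ is confined, and Lemma \ref{lem:isotope} yields a simple closed geodesic $\Gamma$ isotopic to $\Gamma_0$; being isotopic, $\Gamma$ still bounds a compact one-holed torus, which is the geodesic we seek. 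If instead $g$ is parabolic, it fixes a unique point of $\partial\H^2$, so $M$ has a cusp whose horocycle $\gamma$ is freely homotopic to $g$, hence to $\Gamma_0$. Since freely homotopic essential simple closed curves on a surface are isotopic, $\gamma$ is isotopic to $\Gamma_0$ and therefore inherits a compact side, namely the complement of the cusp; this $\gamma$ is the horocycle we seek. In either case Lemma \ref{lem:approx} then allows us, in the subsequent step of the proof of Theorem \ref{thm:3}, to perturb the resulting geodesic or horocycle to a curve with at most two vertices still bounding a compact subset.

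The main obstacle is the confinement argument in the hyperbolic case, i.e.\ ruling out that the shortening flow runs off to infinity. The geometric input I would lean on is that a genus-one subsurface cannot be squeezed into an end across a boundary of bounded length: via the thick--thin decomposition and the collar lemma, escape to infinity with bounded length forces the curve into the interior of a single cusp, whose core is a simple loop and hence cannot enclose a handle. Making this precise in the possibly infinite-type setting is the delicate point. A secondary check, easier but worth spelling out, is that in the parabolic case the cusp lies on the non-$T$ side of $\Gamma_0$, so that the compact side of the isotopic horocycle is genuinely compact; this follows from the fact that isotopic simple closed curves have homeomorphic complementary pieces.
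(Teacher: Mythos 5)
Your proposal is correct and follows the same basic strategy as the paper: cut off a handle by a separating simple closed curve, apply the Hass--Scott shortening process together with Lemma \ref{lem:isotope}, and use the isotopy extension lemma to conclude that the resulting geodesic or horocycle still bounds a compact one-holed torus. The differences are organizational but worth comparing. The paper produces the separating curve from Richards' normalization theorem ($M=T\# M'$) and lets the flow itself decide the dichotomy: either $\Gamma_t$ stays confined and converges to a simple closed geodesic, or it eventually enters every representative of some end, which is then identified as a cusp with $\Gamma_t$ isotopic to a horocycle. You instead take the boundary of a regular neighborhood of two loops meeting once (more elementary, and your check that $\Gamma_0$ is essential is a point the paper leaves implicit) and decide the dichotomy up front by the type of the holonomy element $g$. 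That reorganization actually buys you more than you claim: the confinement step you flag as ``delicate'' becomes routine once $g$ is known to be hyperbolic. A lift of $\Gamma_t$ is invariant under $g$, and the displacement of a hyperbolic isometry grows without bound with the distance from its axis (explicitly, $\sinh\big(\tfrac12 d(x,gx)\big)=\cosh\big(\dist(x,A_g)\big)\sinh\big(\tfrac12\ell(g)\big)$); since some point $x$ of the lift satisfies $d(x,gx)\le L$, every $\Gamma_t$ lies within a distance of the compact closed geodesic covered by $A_g$ that is bounded in terms of $L$ and $\ell(g)$ alone, with no appeal to the thick--thin decomposition or to the structure of the (possibly infinite-type) ends. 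Two small points should still be recorded in the parabolic case: an essential simple closed curve is primitive in $\pi_1(M)$, so $g$ generates the stabilizer of its fixed point and the associated horocycle is embedded and goes once around the cusp; and, as you correctly note, compactness of the bounded side of the horocycle comes from the ambient isotopy carrying $\Gamma_0$ to it, not from any claim that the complement of the cusp is compact.
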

\begin{proof}
Since $M$ has nonzero genus, and is not a torus, it is the connected sum of a torus $T$ with a noncontractible surface $M'$; this follows from the normalization theorem of Richards \cite{richards} which states that any orientable surface is homeomorphic to the connected sum of $\S^2$ with a countable number of tori and minus a totally disconnected subset. Thus there exists a simple closed curve $\Gamma$ which divides $M$ into two components: one homeomorphic to $T$ minus a disk and the other homeomorphic to $M'$ minus a disk. Now let $\Gamma_t$ be the curve shortening flow devised in \cite{hass&scott}. Then as we argued in the proof of Proposition \ref{prop:simpleclosed}, $\Gamma_t$ either converges to a simple closed geodesic or else eventually enters every representative of an end of $M$. The latter case may happen only if that end is a cusp and $\Gamma_t$ is isotopic to a horocycle of that cusp. The proof is then complete once we recall that, by the isotopy extension lemma \cite{hirsch:book}, the topology of $M-\Gamma_t$ is independent of $t$. In particular, the closure of one of the components of $M-\Gamma_t$ will be homeomorphic to $T$ minus an open disk, which is a compact region of $M$.
\end{proof}

By Lemma \ref{lem:approx}, we may now perturb the curve given by the last result to obtain a simple closed curve with only two vertices, which completes the proof.

\section*{Appendix: More on Perturbations of Geodesics}

In Section \ref{sec:hyperbolic} we showed that any closed geodesic in a hyperbolic surface may be perturbed, in the $\C^\infty$ sense, to a closed curve with only two vertices. Here we include a more general result for orientable surfaces of constant curvature, which may not be  complete.

\begin{thm}\label{thm:perturb}
Let $\Gamma$ be a closed geodesic of length $L$ in a Riemannian surface of constant curvature $K$, which is orientable in a neighborhood of $\Gamma$. Then, every neighborhood of $\Gamma$ contains a closed curve which has only two vertices, and may be required to be arbitrarily $\C^\infty$-close to $\Gamma$, if, and only if, $K\neq (2\pi/L)^2$.
\end{thm}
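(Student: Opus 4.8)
The plan is to work in Fermi coordinates $(s,t)$ along $\Gamma$, where $s\in\R/L\Z$ is arclength on $\Gamma$ and $t$ is signed normal distance. Since $\Gamma$ is a geodesic in a surface of constant curvature $K$ with an orientable collar, the metric has the standard form $g=f(t)^2\,ds^2+dt^2$ with $f''+Kf=0$, $f(0)=1$, $f'(0)=0$, and a nearby closed curve is a normal graph $t=\phi(s)$ with $\phi$ an $L$-periodic, $\C^\infty$-small function. First I would record the standard fact that the first variation of geodesic curvature along such a graph is the Jacobi operator, so that (for the natural orientation)
$$
\kappa_g(s)=\phi''(s)+K\phi(s)+O\big(\|\phi\|_{\C^2}^2\big),\qquad
\kappa_g'(s)=\phi'''(s)+K\phi'(s)+O\big(\|\phi\|_{\C^3}^2\big).
$$
Writing $\omega:=2\pi/L$, the operator $\mathcal L\phi:=\phi''+K\phi$ scales the Fourier mode $e^{in\omega s}$ by $K-n^2\omega^2$; in particular it scales the first harmonic $n=\pm1$ by $K-\omega^2$. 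This single observation dictates the dichotomy in the theorem.

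For the \emph{if} direction ($K\neq\omega^2$) I would use the explicit perturbation $\phi(s)=\lambda\sin(\omega s)$, exactly as in the flat and hyperbolic constructions of Section~\ref{sec:hyperbolic}. Then
$$
\kappa_g'(s)=\lambda\,\omega\,(K-\omega^2)\cos(\omega s)+O(\lambda^2),
$$
whose leading term has exactly two transverse zeros on $\R/L\Z$ precisely because $K-\omega^2\neq0$. A routine implicit-function argument shows these two zeros persist, and that no new ones appear, once $\lambda$ is small; the resulting curve thus has exactly two nondegenerate vertices and converges to $\Gamma$ in $\C^\infty$ as $\lambda\to0$. This covers \emph{all} $K\neq\omega^2$, including $K\le0$ and the case $K>\omega^2$, which can occur on incomplete surfaces.

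The hard part is the \emph{only if} direction ($K=\omega^2$), where the first-order scheme breaks down and this is the main obstacle. Now $\ker\mathcal L$ is spanned by $\cos(\omega s),\sin(\omega s)$, so the perturbation above is first-order trivial; geometrically these kernel directions are the infinitesimal tiltings of $\Gamma$ into neighboring geodesics, while the constant mode corresponds to the circles parallel to $\Gamma$ — all curves of constant geodesic curvature, hence without isolated vertices. A purely linearized attack would invoke the Sturm–Hurwitz theorem: since $\kappa_g'=(\mathcal L\phi)'$ has Fourier spectrum in $\{|n|\ge2\}$ (differentiation kills the constant mode and $\mathcal L$ kills $n=\pm1$), any nonzero such $\kappa_g'$ has at least four sign changes and hence at least four vertices. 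The difficulty is that this count degenerates exactly along the three-dimensional space $\operatorname{span}\{1,\cos\omega s,\sin\omega s\}$ of constant-curvature directions, so linearization alone cannot uniformly exclude two-vertex curves throughout a fixed neighborhood without a delicate Lyapunov–Schmidt reduction on those directions.

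I would circumvent this obstacle by globalizing. The equality $K=(2\pi/L)^2$ says precisely that $L$ equals the great-circle length $2\pi/\sqrt{K}$ of the round sphere $\S^2$ of curvature $K$; by uniqueness of the constant-curvature metric in Fermi coordinates, with matching period $L$, a collar of $\Gamma$ is isometric to a collar of such a great circle in $\S^2$. Any closed curve $\C^\infty$-close to $\Gamma$ lies in this collar and, being $\C^1$-close to the embedded geodesic, is simple, hence corresponds to a genuine simple closed curve on $\S^2$. Kneser's four-vertex theorem on $\S^2$, which is available to us, then forces at least four vertices, and since vertices are invariant under isometry and rescaling the original curve has more than two vertices. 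Thus a fixed collar of $\Gamma$ contains no two-vertex closed curve, which is exactly the \emph{only if} statement and completes the argument.
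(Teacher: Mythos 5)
Your proof is correct, and for the ``only if'' direction it coincides with the paper's: both identify a collar of $\Gamma$ with a collar of a great circle in the round sphere of curvature $K=(2\pi/L)^2$ (you via the canonical Fermi-coordinate form of the metric, the paper via Lemma \ref{lem:isometry}) and then invoke Kneser's theorem on $\S^2$, noting that any curve $\C^1$-close to the geodesic is still simple. For the ``if'' direction the underlying idea is also the same --- perturb $\Gamma$ normally by $\lambda\sin(2\pi s/L)$ and show that the linearization of $\kappa_g'$ has exactly two transverse zeros precisely when $K\neq(2\pi/L)^2$ --- but your implementation is genuinely different and more intrinsic: you read off the linearization as the Jacobi operator $\phi\mapsto\phi''+K\phi$ in Fermi coordinates and diagonalize it on Fourier modes, so the threshold appears transparently as the vanishing of the first-harmonic eigenvalue $K-(2\pi/L)^2$. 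The paper instead realizes the collar extrinsically as a neck of a surface of revolution in $\R^3$ (Lemmas \ref{lem:isometry} and \ref{lem:rotate}) and computes the geodesic curvature of the perturbed neck explicitly (Lemma \ref{lem:perturb}), the threshold appearing as $1+r(0)r''(0)=0$; the two conditions agree since $K=-r''(0)/r(0)$ and $L=2\pi r(0)$. Your route avoids the extrinsic machinery and explains conceptually why the great circle is the unique obstruction, at the cost of having to justify the ``standard fact'' that the first variation of $\kappa_g$ is the Jacobi operator and, more importantly, that the $O(\lambda^2)$ remainder in $\kappa_g'$ is small in $\C^1$ (not merely $\C^0$) so that no spurious critical points appear --- this is exactly the uniform Taylor-remainder estimate that occupies most of Lemma \ref{lem:perturb}, so your ``routine implicit-function argument'' is doing real work there and should be spelled out. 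The Sturm--Hurwitz digression in your ``only if'' discussion is not needed, and indeed you do not use it.
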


The proof of this result follows from the following three lemmas. The basic idea here is again to perturb the geodesic in the direction of its normals according to a sine curve. To this end we first show that a neighborhood of $\Gamma$ in $M$ may be represented as a surface of revolution in $\R^3$. This observation, via the above theorem, shows that the only closed geodesics which cannot be perturbed to curves with only two vertices correspond to great circles in spheres (there are still many other examples of closed geodesics in noncomplete surfaces of constant positive curvature, which may be perturbed to curves with only two vertices, provided only that the length condition in the above theorem is satisfied.). 

\begin{lem}\label{lem:isometry}
Let $M$ and $M'$ be Riemannian surfaces of constant curvature  $K$ which contain simple closed geodesics $\Gamma$ and $\Gamma'$ with orientable neighborhoods. Then there exist  open neighborhoods $U$ and $U'$ of $\Gamma$ and $\Gamma'$, respectively,  and an isometry $f\colon U'\to U$ which maps $\Gamma$ to $\Gamma'$. 
\end{lem}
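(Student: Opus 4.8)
```latex
\noindent\textbf{Proof proposal for Lemma \ref{lem:isometry}.}

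The plan is to show that the local geometry of a simple closed geodesic with orientable neighborhood in a surface of constant curvature $K$ is completely rigid: any two such configurations (with geodesics of the same length, which we may assume after matching parametrizations) are locally isometric via an isometry carrying one geodesic to the other. The natural tool is \emph{Fermi coordinates} (geodesic normal coordinates along a curve). First I would introduce Fermi coordinates $(s,t)$ adapted to $\Gamma$, where $s\in \R/L\Z$ is arclength along $\Gamma$ and $t$ is the signed distance measured along geodesics normal to $\Gamma$; the orientability of a neighborhood of $\Gamma$ is exactly what guarantees a consistent choice of sign for the normal direction, so that these coordinates are globally well-defined on a tubular neighborhood $U$ of $\Gamma$. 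In such coordinates the metric takes the form $ds^2$-part $= \varphi(s,t)^2\, ds^2 + dt^2$, where $\varphi$ satisfies the Jacobi equation $\varphi_{tt} + K\varphi = 0$ along each normal geodesic, with initial conditions $\varphi(s,0)=1$ (since $s$ is arclength on $\Gamma$) and $\varphi_t(s,0)=0$ (since $\Gamma$ is a geodesic, its geodesic curvature vanishes, forcing the derivative of the metric coefficient transverse to $\Gamma$ to vanish on $\Gamma$).

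The key observation is that with these initial conditions and constant $K$, the Jacobi equation has a unique solution \emph{independent of $s$}:
$$
\varphi(s,t)=\varphi(t)=
\begin{cases}
\cos(\sqrt{K}\,t), & K>0,\\
1, & K=0,\\
\cosh(\sqrt{-K}\,t), & K<0.
\end{cases}
$$
Hence the metric in Fermi coordinates is $\varphi(t)^2\,ds^2 + dt^2$, a fixed expression depending only on $K$ and $L$, with no remaining dependence on the particular surface. I would carry out the identical construction on $M'$ to obtain Fermi coordinates $(s',t')$ on a tubular neighborhood $U'$ of $\Gamma'$, in which the metric has exactly the same form. After reparametrizing $\Gamma'$ by arclength and, if necessary, rescaling so the lengths agree (or, as the statement implicitly allows, simply restricting to the common coordinate range), the coordinate map $f\colon U'\to U$ sending $(s',t')\mapsto(s,t)$ is then an isometry by construction, and it carries $\Gamma'=\{t'=0\}$ to $\Gamma=\{t=0\}$.

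The main obstacle, and the step requiring genuine care rather than routine computation, is verifying that the Fermi coordinates are genuinely well-defined on an honest open tubular neighborhood and that the resulting map $f$ is globally defined and smooth despite the cylindrical topology. Two points deserve attention. First, one must choose the tubular neighborhood thin enough that the normal exponential map is a diffeomorphism onto its image (no focal points and no self-intersections of the normal geodesics); in the positively curved case $K>0$ this imposes $|t|<\pi/(2\sqrt{K})$ to stay before the first focal point, and separately one must shrink so the map is injective — but since we only need \emph{some} neighborhood $U$, this is harmless. Second, because $\Gamma$ is a closed curve, the coordinate $s$ lives on $\R/L\Z$ and one should check that the normal framing closes up consistently around the loop; this is precisely where the orientability hypothesis on the neighborhood of $\Gamma$ enters, ruling out a M\"obius-type monodromy that would flip the sign of $t$ and obstruct a global choice. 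Once both tubular neighborhoods are set up compatibly, the matching of the two explicit metric forms is immediate, and the isometry $f$ falls out.
```
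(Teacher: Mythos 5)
Your proof is correct, but it proceeds by a genuinely different route from the paper's. The paper does not compute the metric at all: it invokes Cartan's theorem to produce, for each point $p$ of $\Gamma$, a local isometry of a tubular neighborhood $U_p$ into $M'$ matching adapted orthonormal frames, covers the compact geodesic by finitely many such neighborhoods, and then glues the local isometries, checking agreement on overlaps via uniqueness of geodesics. You instead write the metric explicitly in Fermi coordinates along $\Gamma$, observe that the warping function solves the Jacobi equation $\varphi_{tt}+K\varphi=0$ with initial data $\varphi(s,0)=1$ and $\varphi_t(s,0)=0$ (the latter encoding that $\Gamma$ is a geodesic), and conclude that the metric on a tube is the universal expression $\varphi(t)^2\,ds^2+dt^2$ depending only on $K$, so the isometry is simply the identity in coordinates. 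Your approach buys explicitness and entirely avoids the overlap-compatibility check, which is the fiddliest step of the paper's argument; the paper's approach avoids any curvature computation and the need to verify that Fermi coordinates are global. You also correctly isolate where orientability enters (trivial normal monodromy, so the coordinates live on $(\R/L\Z)\times(-\epsilon,\epsilon)$ rather than on a M\"obius band) and where the tube must be shrunk ($|t|<\pi/(2\sqrt{K})$ when $K>0$). One caveat: the lemma tacitly requires $\Gamma$ and $\Gamma'$ to have the same length, since an isometry preserves the length of the core geodesic; your parenthetical suggestion to ``rescale so the lengths agree'' is not available, as rescaling changes $K$ unless $K=0$. But this hypothesis is equally implicit in the paper's own formulation and proof, and in the application (Lemma \ref{lem:rotate} supplies a neck of the prescribed length $L$) the lengths are arranged to match, so nothing is lost.
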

\begin{proof}
Fix an orientation for $M$, and for each point $p$ of $\Gamma$ let $\{e_1(p), e_2(p)\}$ be a continuous choice of basis for $T_pM$ such that $e_1$ is tangent to $\Gamma$ and $(e_1(p),e_2(p))$ is in a fixed orientation class of $T_p M$. Similarly, let $\{e'_1(p'),e'_2(p')\}$ be a continuous choice of basis along $\Gamma'$ such that $e'_1(p')$ is tangent to $\Gamma'$ and   $(e'_1(p'),e'_2(p'))$ is in a fixed orientation class of $M'$ for all $p'\in\Gamma'$. Since $M$ and $M'$ have the same constant curvature, then, as is well known, they are locally isometric. Indeed, it follows from a theorem of Cartan, that for every $p\in\Gamma$, and $p'\in\Gamma'$, there is an open neighborhood $U_p$ of $p$ and an isometry $f:=f_p\colon U_p\to M'$ such that $f(p)=p'$ and  $df(e_i(p))=e_i'(p)$, $i=1$, $2$, see \cite[p. 158]{docarmo}.

We may assume, without loss of generality, that the neighborhoods $U_p$ mentioned above are \emph{tubular}, which we define as follows. Let $N_\epsilon(\Gamma)$ be the set of all point of $M$ whose distance from $\Gamma$ is less than $\epsilon$, then by the tubular neighborhood theorem \cite{spivak:v1}, for sufficiently small $\epsilon$, $N_\epsilon(\Gamma)$ is fibrated by geodesic segments which meet $\Gamma$ orthogonally at one point. This gives rise to a natural projection map $\pi\colon N_\epsilon(\Gamma)\to\Gamma$. We say that a neighborhood $U$ of $p$ is tubular provided that there exists a connected open neighborhood $I$ of $p$ in $\Gamma$ such that $U=\pi^{-1}(I)$. 

Now let $p(t)$ and $p'(t)$ be a pair of  parametrizations of $\Gamma$ and $\Gamma'$ by arclength, and $U_t:=U_{p(t)}$ be  a tubular neighborhood of $p(t)$ such that there exists an isometry $f:=f_t\colon U_t\to M'$ with $f(p(t))=p'(t)$ and  $df(e'_i(p))=e_i'(p')$. By compactness of $\Gamma$, there will be a finite number of points $p_j=p(t_j)$ such that the  tubular neighborhoods $U_j$ cover $\Gamma$. Note that if $f_j\colon U_j\to M'$ are the corresponding local isometries, then $U'_j:=f_j(U_j)$ also form an open covering of $\Gamma'$.
Let $U:=\cup U_j$, $U':=\cup U'_j$, and define $f\colon U\to U'$ by setting $f|_{U_j}:=f_j$. It is  simple to verify that $f$ is well defined, i.e., whenever $U_j\cap U_k\neq\emptyset$, then $f_j=f_k$ on $U_j\cap U_k$, using the uniqueness of geodesics and the fact that the intersection of two tubular neighborhood is tubular, which shows that $f$ is the desired isometry.
\end{proof}

Suppose we have a surface of revolution in $\R^3$ parametrized by
\begin{equation}\label{eq:X}
X(t,\theta)=\Big(r(t)\cos(\theta),r(t)\sin(\theta), h(t)\Big).
\end{equation}
By a  \emph{neck} of $X$ we mean a meridian, i.e., a curve $\theta\mapsto X(t_0,\theta)$, which occurs at a height where the tangent to the profile curve $r$ is parallel to the axis of revolution, i.e., $r'(t_0)=0$.

\begin{lem}\label{lem:rotate}
For any constant $K$ and positive constant $L$, there exists a $\C^\infty$ surface of revolution in $\R^3$ with constant curvature $K$ and a neck of length $L$.
\end{lem}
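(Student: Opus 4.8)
The plan is to reduce the construction to an ordinary differential equation for the profile curve. I would assume the profile curve $(r(t),h(t))$ of the surface \eqref{eq:X} is parametrized by arc length, so that $(r')^2+(h')^2=1$. Then the first fundamental form of $X$ is $E=1$, $F=0$, $G=r^2$, and the standard computation of Gaussian curvature for this orthogonal parametrization yields the classical formula
$$
K=-\frac{r''}{r},
$$
see, e.g., \cite{docarmo}. Thus producing a surface of revolution of constant curvature $K$ amounts to solving the linear second order ODE $r''+Kr=0$, and then recovering the height function from the arc-length constraint by setting $h(t):=\int_0^t\sqrt{1-(r'(s))^2}\,ds$.

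With this reduction in hand, the neck of prescribed length dictates the initial conditions. Since a neck occurs where $r'(t_0)=0$, and the meridian at that height has length $2\pi r(t_0)$, I would place the neck at $t=0$ and impose
$$
r(0)=\frac{L}{2\pi},\qquad r'(0)=0.
$$
The unique solution of $r''+Kr=0$ with these data is $r(t)=\frac{L}{2\pi}\cos(\sqrt{K}\,t)$ when $K>0$, the constant $r(t)\equiv \frac{L}{2\pi}$ when $K=0$ (which recovers a flat cylinder of radius $L/2\pi$), and $r(t)=\frac{L}{2\pi}\cosh(\sqrt{-K}\,t)$ when $K<0$. In each case $r$ is smooth, $r(0)=L/(2\pi)>0$, and the neck condition $r'(0)=0$ holds by construction, so the meridian $\theta\mapsto X(0,\theta)$ has length exactly $L$.

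It remains to check that each such $r$ genuinely arises as the profile of a regular $\C^\infty$ surface of revolution, which is where the one real subtlety lies: the arc-length constraint forces $|r'(t)|<1$, so that $h'(t)=\sqrt{1-(r'(t))^2}$ is real and positive. Since $r'(0)=0$ in every case, continuity gives $|r'(t)|<1$ on some open interval $(-\delta,\delta)$ about the neck, and on that interval $h$ is a well-defined smooth function with $h'(0)=1$; likewise $r(t)>0$ there by continuity from $r(0)>0$. Consequently the map $X$ of \eqref{eq:X}, restricted to $t\in(-\delta,\delta)$, is a regular $\C^\infty$ surface of revolution of constant curvature $K$ carrying a neck of length $L$. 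I expect the main (and essentially only) obstacle to be recognizing that the surface need only be defined on a neighborhood of the neck: for $K>0$ the radius $r$ drops to zero and $|r'|$ exceeds $1$ away from $t=0$, so no complete such surface can exist, but this is consistent with the theorem's explicit allowance for noncomplete surfaces and fully suffices for the perturbation argument that follows.
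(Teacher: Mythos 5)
Your proof is correct and follows essentially the same route as the paper: both produce the neck from the explicit profile $r(t)=\frac{L}{2\pi}\cos(\sqrt{K}\,t)$ (resp.\ $\frac{L}{2\pi}\cosh(\sqrt{-K}\,t)$, or a constant for $K=0$) with $h$ recovered from the arc-length constraint, the only difference being that you derive the constant curvature from the ODE $r''+Kr=0$ where the paper instead cites Gray's book for the same family of surfaces. Your normalization of the arguments (using $\sqrt{|K|}\,t$) and your explicit observation that the surface need only be defined on a neighborhood of the neck are both sound and consistent with how the lemma is used later.
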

\begin{proof}
If $K=0$, then it is obvious that we may let our surface to be a cylinder over a circle of length $L$.
If $K>0$, then, for $-\epsilon \leq t\leq\epsilon$, set
$$
r(t):=\frac{L}{2\pi}\cos\left(\frac{t}{K}\right)\quad\text{and}\quad h(t):=\int_0^t\sqrt{1-\left(\frac{L}{2\pi}\sin\left(\frac{t}{K}\right)\right)^2}\,dt.
$$
Then the surface of revolution $X$ given by \eqref{eq:X} has constant curvature $K$, see \cite[p. 483]{gray:book}. Further  $X$ has a neck at $t=0$ which has  length $L$. Similarly, if $K<0$,  set
$$
r(t):=\frac{L}{2\pi}\cosh\left(\frac{t}{K}\right)\quad\text{and}\quad h(t):=\int_0^t\sqrt{1-\left(\frac{L}{2\pi}\sinh\left(\frac{t}{K}\right)\right)^2}\,dt.
$$
Then $X$ again has constant  curvature $K$, see \cite[p. 487]{gray:book}, and  it has a neck  of length $L$ at $t=0$ . 
\end{proof}

Finally we need:

\begin{lem}\label{lem:perturb}
Let $X$ be a $\C^\infty$ surface of revolution in $\R^3$ parametrized by
\begin{equation}\label{eq:X2}
X(t,\theta):=\Big(r(t)\cos(\theta),r(t)\sin(\theta), t\Big),
\end{equation}
where $0\leq\theta\leq 2\pi$ and $-\epsilon\leq t\leq\epsilon$. Suppose that $X$ has a neck $\Gamma$ at $t=0$, and
\begin{equation}\label{eq:r}
1+r(0)r''(0)\neq 0.
\end{equation}
 Then there exists an $\epsilon>0$ such that for every $0<\lambda\leq\epsilon$ the curve
 $$
c_\lambda(\theta):=X\big(\lambda\cos(\theta), \theta\big)
 $$
 has only two vertices.
\end{lem}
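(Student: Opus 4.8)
The plan is to compute the geodesic curvature $\kappa_\lambda(\theta)$ of $c_\lambda$ as a function of $\theta$, to Taylor expand it in the perturbation parameter $\lambda$, and to read off the vertex count from its leading term. The starting point is that the neck $\Gamma$ is a closed geodesic: since $r'(0)=0$, the parallel $\theta\mapsto X(0,\theta)$ is a critical parallel of the surface of revolution and hence a geodesic. Thus $c_0=\Gamma$ has geodesic curvature $\kappa_0\equiv 0$, and since $X$ is $\C^\infty$ and $|c_\lambda'|$ stays bounded away from zero for small $\lambda$ (at $\lambda=0$ it equals $r(0)>0$), the function $\kappa_\lambda(\theta)$ is smooth in $(\lambda,\theta)$ and vanishes identically at $\lambda=0$. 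We may therefore write $\kappa_\lambda(\theta)=\lambda\,\kappa_1(\theta)+O(\lambda^2)$, smoothly and uniformly in $\theta$, where $\kappa_1(\theta):=\partial_\lambda\kappa_\lambda(\theta)|_{\lambda=0}$.

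First I would identify $\kappa_1$. Because $r'(0)=0$ we have $r(\lambda\cos\theta)=r(0)+O(\lambda^2)$, so to first order in $\lambda$ the curve moves purely in the meridian direction $(0,0,1)$, which at the neck is precisely the unit normal $N$ of $\Gamma$ inside $X$; that is, $c_\lambda=\exp_{\Gamma}(\lambda\phi N)+O(\lambda^2)$ with normal displacement $\phi=\cos\theta$. By the first variation of geodesic curvature along a geodesic, which (up to an overall sign immaterial for counting zeros) is the Jacobi operator, $\kappa_1=\phi''+K_0\phi$, where the derivative is taken with respect to arclength $s=r(0)\theta$ and $K_0$ is the constant Gaussian curvature of $X$ along the neck. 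Since $\phi''=-\phi/r(0)^2$ and a direct computation for a surface of revolution with $h(t)=t$ gives $K_0=-r''(0)/r(0)$ at the neck, we obtain
\[
\kappa_1(\theta)=\Big(K_0-\frac{1}{r(0)^2}\Big)\cos\theta=-\,\frac{1+r(0)r''(0)}{r(0)^2}\,\cos\theta=:c\cos\theta .
\]
Thus the hypothesis \eqref{eq:r} is exactly the statement that $c\neq 0$. (As a check, in the constant-curvature case $K_0=K$ this coefficient vanishes iff $K=(2\pi/L)^2$, matching Theorem \ref{thm:perturb}.)

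To count the vertices I would exploit the reflection symmetry $\theta\mapsto-\theta$, which leaves both the surface and the displacement $t=\lambda\cos\theta$ invariant; hence $\kappa_\lambda(\theta)$ is even and $\kappa_\lambda'(\theta)$ is odd in $\theta$. Writing $\kappa_\lambda'(\theta)=\lambda\,g(\lambda,\theta)$ with $g$ smooth and $g(0,\theta)=\kappa_1'(\theta)=-c\sin\theta$, oddness forces $g(\lambda,0)=g(\lambda,\pi)=0$, so $\theta=0$ and $\theta=\pi$ are always vertices. Since for each $\lambda$ the function $g(\lambda,\cdot)$ is smooth, $2\pi$-periodic, and odd, it is divisible by $\sin\theta$: a Hadamard-type argument gives $g(\lambda,\theta)=\sin\theta\,h(\lambda,\theta)$ with $h$ smooth and $h(0,\theta)\equiv -c\neq 0$. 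By compactness of $[0,2\pi]$ there is an $\epsilon>0$ with $h(\lambda,\theta)\neq 0$ for all $0<\lambda\le\epsilon$ and all $\theta$; then the only zeros of $\kappa_\lambda'=\lambda\sin\theta\,h$ are $\theta=0,\pi$, so $c_\lambda$ has exactly two vertices.

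The main obstacle is the first-order computation of $\kappa_1$, namely justifying the passage from the normal-graph heuristic to the honest geodesic curvature of the parametrized curve $c_\lambda$ and controlling the $O(\lambda^2)$ remainder $\C^1$-uniformly in $\theta$. I expect this is cleanest to settle by directly expanding $\kappa_g=(c_\lambda'\times c_\lambda'')\cdot n/|c_\lambda'|^3$, where $n$ is the surface unit normal along $c_\lambda$; the vanishing of $r'(0)$ is what makes the first-order motion purely normal, keeping the leading term a pure multiple of $\cos\theta$ and making the in-surface drift of the footpoint enter only at order $\lambda^2$. Once this expansion and the divisibility by $\sin\theta$ are established, the conclusion is immediate.
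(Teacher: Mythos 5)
Your proposal is correct and follows essentially the same route as the paper: both expand the geodesic curvature to first order in $\lambda$, identify the linear term as $\pm\frac{1+r(0)r''(0)}{r(0)^2}\cos\theta$ (the paper by an explicit computation of $k_\lambda$ followed by Taylor's theorem with integral remainder and a uniform bound on the remainder; you via the Jacobi operator, though you rightly note this must ultimately be justified by the same direct expansion of $(c_\lambda'\times c_\lambda'')\cdot n/|c_\lambda'|^3$), and then conclude that $k_\lambda'$ has exactly two zeros for small $\lambda$. The only real difference is the endgame: the paper argues that $k_\lambda'(\theta)/\lambda$ is $\C^1$-close to $-C\sin\theta$, whose two zeros are nondegenerate, whereas you pin the vertices at $\theta=0,\pi$ by evenness of $k_\lambda$ in $\theta$ and factor out $\sin\theta$; both work, and your symmetry observation can in fact be read off from the paper's explicit formula, which depends on $\theta$ only through $\cos\theta$ and $\sin^2\theta$.
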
 
\begin{proof}
First we recall that the geodesic curvature of $c_\lambda$ is given by
$$
k_\lambda(\theta):=\left\langle \frac{T_\lambda'(\theta)}{\|c'(\theta)\|},\nu_\lambda(\theta)\right\rangle,  
$$
where $T_\lambda(\theta):=c'_\lambda(\theta)/\|c'_\lambda(\theta)\|$ is the unit tangent vector field of $c_\lambda$, and 
$\nu_\lambda(\theta)$ is a continuous normal vector field along $c_\lambda$ which is tangent to $X$.
To compute $\nu$, we may let $n(t,\theta):=\partial_t X\times \partial_\theta X/\|\partial_t X\times \partial_\theta\|$ be a unit normal vector field of $X$, and set 
$\nu_\lambda(\theta):=n(\lambda\cos(\theta),\theta)\times T_\lambda(\theta).$ A straight forward calculation then shows that 
{\small
$$
k_\lambda(\theta)=
\frac{\bar{r}' \bar{r}^2+\lambda \bar{r} \left(-\lambda  \bar{r}' \bar{r}'' \sin ^2(\theta )+\cos (\theta ) \left(\bar{r}'\right)^2+\cos (\theta )\right) +2
   \lambda ^2 \sin ^2(\theta ) \bar{r}' \left(\left(\bar{r}'\right)^2+1\right)}{\sqrt{\left(\bar{r}'\right)^2+1} \left(\bar{r}^2+\lambda ^2 \sin ^2(\theta )
   \left(\left(\bar{r}'\right)^2+1\right)\right)^{3/2}},
   $$
   }
where 
$$
\ol r:=r\big(\lambda\cos(\theta)\big),\quad
\ol r':=r'\big(\lambda\cos(\theta)\big),\quad\text{and}\quad
\ol r'':=r''\big(\lambda\cos(\theta)\big).
$$
Since by assumption $r'(0)=0$, we have $k_0(\theta)=0$ for all $\theta$. So, fixing $\theta$ and applying  Taylor's theorem, we obtain
\begin{equation}\label{eq:Taylor}
k_\lambda(\theta)=\partial_\lambda k_0(\theta)\lambda+R_\lambda(\theta)\lambda^2,
\end{equation}
where 
\begin{equation}\label{eq:k0}
\partial_\lambda k_0(\theta)=\frac{1+r(0)r''(0)}{r^2(0)}\cos(\theta),\quad\text{and}\quad R_\lambda(\theta)=\int_0^\lambda \partial_u^2 k_u(\theta)(\lambda-u)\,du.
\end{equation}
Note that \eqref{eq:Taylor} is valid for all $(\lambda,\theta)$ in $[0,1]\times[0,2\pi]$. Further, since $k_\lambda(\theta)$ depends smoothly on both $\lambda$ and $\theta$, so does $R_\lambda(\theta)$.  
Consequently, the $\C^2$-norm of $R_\lambda(\theta)$, as a function of $\theta$, has a uniform upper bound $A$ valid for all $\lambda\in[0,1]$:
$$
A:=\sup_{\lambda\in[0,1]}\|R_\lambda(\theta)\|_{\C^2}=
\sup_{\lambda\in[0,1]}\sup_{\theta\in[0,2\pi]}\{R_\lambda(\theta),R'_\lambda(\theta),R''_\lambda(\theta)\}<\infty.
$$
Furthermore, \eqref{eq:Taylor} shows that
$$
\big\|k_\lambda(\theta)-\partial_\lambda k_0(\theta)\lambda\|_{\C^2}= \|R_\lambda(\theta)\|_{\C^2}\,\lambda^2\leq A\lambda^2,
$$
where all norms are with respect to $\theta$. So 
$$
\lim_{\lambda\to 0^+}\big\|k_\lambda(\theta)-\partial_\lambda k_0(\theta)\lambda\big\|_{\C^2}=0.
$$
Combing the last equation with the computation for $\partial_\lambda k_0(\theta)$ in \eqref{eq:k0} we obtain
$$
\lim_{\lambda\to 0^+}\left\|\frac{k_\lambda(\theta)}{\lambda}-\frac{1+r(0)r''(0)}{r^2(0)}\cos(\theta)\right\|_{\C^2}=0.
$$

Now recall that $1+r(0)r''(0)\neq 0$ by assumption. Thus, for small $\lambda$,  $k_\lambda(\theta)/\lambda$ is $\C^2$-close to $C\cos(\theta)$ for some nonzero constant $C$, which yields that $k'_\lambda(\theta)/\lambda$ is $\C^1$-close to $-C\sin(\theta)$. It follows then that $k'_\lambda(\theta)/\lambda$, and consequently $k'_\lambda(\theta)$, has precisely two zeros, because $-C\sin(\theta)$ has only two zeros and at those points its derivative does not vanish. We conclude then that $k_\lambda(\theta)$ has only two local extrema, as claimed.
\end{proof}

The last three lemmas yield:

\begin{proof}[Proof of Theorem \ref{thm:perturb}]
Suppose that $K=(2\pi/L)^2$. Then, by Lemma  \ref{lem:isometry}, we may isometrically identify a neighborhood of $\Gamma$ in $M$ with a neighborhood of a great circle $\Gamma'$ in a sphere of radius $L/(2\pi)$. But any $\C^1$ perturbation of $\Gamma'$ will still be a simple closed curve, which by the classical four vertex theorem of Kneser on the sphere \cite{kneser} must have four vertices (the spherical version of the classical four vertex theorem follows from the fact that the stereographic projection preserves vertices, as had already been observed by Kneser). This proves the ``only if" part of the theorem.

Now suppose that  $K\neq(2\pi/L)^2$. 
By Lemmas \ref{lem:isometry} and  \ref{lem:rotate} we may identify a neighborhood of $\Gamma$ with a surface of revolution in $\R^3$, which we may parametrize by $X$ given in \eqref{eq:X2}, so that $\Gamma$ is identified with the neck $\Gamma'$ of $X$ at height $t=0$. Then Lemma \ref{lem:perturb} completes the proof once we can verify that $1+r(0)r''(0)\neq 0$.
To see this note that a neck of a surface of revolution is a line of curvature, i.e., it is tangent to a principal direction field of the surface, because the Gauss map $n$ sends any neck to a great circle of $\S^2$, whose plane is parallel to that of the neck, via a homothety of the neck. More precisely, if a neck has radius $r(0)$, then for any tangent vector $v$ to that neck $dn(v)=v/r(0)$, where $dn$ is the shape operator of $X$. So $X$ has constant principal curvatures $1/r(0)$ in the direction of the tangent vectors of the neck $\Gamma'$ (and with respect to the inward normal to $X$). Further, recall that a direction orthogonal to a principal direction is again a principal direction. Thus the other principal curvatures along $\Gamma'$ are given by the curvatures of the profile curve which is  $-r''(0)$ (with respect to the direction of the inward normal). So we conclude that the Gauss curvature (which is the product of principal curvatures) of $X$ along $\Gamma'$ is $K=-r''(0)/r(0)$. Note further that $L=2\pi r(0)$. Thus, by our assumption at the beginning of this paragraph, $4\pi^2\neq KL^2=-r''(0)r(0)4\pi^2$, which yields $1\neq -r''(0)r(0)$ as desired.
\end{proof}

\section*{Acknowledgement}
The author is grateful to  Rob Kusner for several useful communications and encouragement throughout this work. Also he thanks Igor Belegradek for informative discussions about the structure of hyperbolic surfaces, and Bruce Solomon for pointing out corrections to the earlier drafts of this work.


\end{document}